\newtheorem{theoa}{Theorem}
\newtheorem{propB}{Proposition}
\renewcommand{\thepropB}{}
\newcommand{\PP}{\mathbb{P}}
\newcommand{\ZZ}{\mathbb{Z}}
\newcommand{\CC}{\mathbb{C}}
\newcommand{\HH}{{\mathbb{H}}}
\newcommand{\QQ}{\mathbb{Q}}
\newcommand{\Fb}{\mathbf{F}}
\newcommand{\Cr}{\mathcal{C}}
\newcommand{\Qr}{\mathcal{Q}}
\newcommand{\Or}{\mathcal{O}}
\newcommand{\Er}{\mathcal{E}}
\newcommand{\Fr}{\mathcal{F}}
\newcommand{\dr}{\partial}
\newcommand{\Ric}{\mathfrak{R}}
\newcommand{\dd}{\mathrm{d}}
\newcommand*{\DashedArrow}[1][]{\mathbin{\tikz [baseline=-0.25ex,-latex, dashed,#1] \draw [#1] (0pt,0.5ex) -- (1.3em,0.5ex);}}
\title[Isomonodromic deformations over the five punctured sphere]{A new two--parameter family of isomonodromic deformations over the five punctured sphere}
\author{Arnaud Girand}
\address{IRMAR (UMR 6625 du CNRS), Universit\'e de Rennes 1, campus de Beaulieu. B\^at. 22-23. 35042 Rennes Cedex}
\curraddr{}
\email{arnaud.girand@univ-rennes1.fr}
\thanks{}
\subjclass{14E22, 20G05, 20G20, 32D15, 32G08, 32G34, 34M50, 34M56, 51N15, 55R10}
\begin{document}

\frontmatter

\date{}

\dedicatory{}

\begin{abstract}
The object of this paper is to describe an explicit two--parameter family of logarithmic flat connections over the complex projective plane. These connections have dihedral monodromy and their polar locus is a prescribed quintic composed of a conic and three tangent lines. By restricting them to generic lines we get an algebraic family of isomonodromic deformations of the five--punctured sphere. This yields new algebraic solutions of a Garnier system. Finally, we use the associated Riccati one--forms to construct and prove the integrability (in the transversally projective sense) of a subfamily of Lotka--Volterra foliations. 
\end{abstract}

\begin{altabstract}
Le but de cet article est de décrire une famille explicite à deux paramètres de connexions logarithmiques plates au dessus du plan projectif complexe. Ces connexions sont à monodromie diédrale et leur lieu polaire est une quintique prescrite, composée d'une conique et de trois droites tangentes. Par restriction aux droites génériques, on obtient alors une famille algébrique de déformations isomonodromiques de la sphère à cinq trous. Ceci livre de nouvelles solutions algébriques d'un système de Garnier. Enfin, nous utilisons les formes de Riccati associées à ces connexions pour construire et montrer l'intégrabilité (au sens transversalement projectif) d'une sous-famille de feuilletages de Lotka--Volterra.
\end{altabstract}

\maketitle
\noindent \textbf{Last modified: \today.}
\setcounter{tocdepth}{1}

\noindent\shadowbox{\begin{minipage}{\textwidth}\noindent \textbf{Disclaimer :} \textit{this is an author--created, un--copyedited version of an article accepted for publication in} Bulletin de la Société Mathématique de France\textit{. The Version of Record is available online at}  
\footnotesize\verb+http://smf4.emath.fr/Publications/Bulletin/144/html/smf_bull_144_339-368.php+ \textit{.}\end{minipage}}

\normalsize

\mainmatter

\section{Introduction} 

In this section we describe the main result of this paper, namely an explicit construction of a two--parameter family of logarithmic flat connections over the complement of a particular quintic curve in $\PP^2_\CC$. The restriction of any element of this family to generic lines in the projective plane gives a isomonodromic deformation over the five punctured sphere, to which we can associate an algebraic solution of some Hamiltonian system of partial differential equations, namely the Garnier-$2$ system.

\subsection{Topology of the complement of a particular plane quintic}\label{sec:topology} In this paper, we concern ourselves with setting up a two--parameter family of logarithmic flat $\mathfrak{sl}_2(\CC)$--connections over $\PP^2$ with a specific polar locus, namely a quintic curve $\Qr$ composed of a circle and three tangent lines. More precisely, in homogeneous coordinates $[x:y:t]$, $\Qr$ is defined, up to $PGL_3(\CC)$ action, by the equation
\begin{displaymath}
xyt(x^2 + y^2 + t^2 - 2(xy+xt+yt)) = 0 \; . 
\end{displaymath}

Before stating our main result, let us specify what we are looking for: we want to find a family of rank two logarithmic flat connections over $\PP^2$ with polar locus equal to some small degree curve and "interesting monodromy". We will show that it is possible to do so with the quintic $\Qr$ define above.

\begin{defi}\label{def:nonDeg}
We say that the monodromy representation associated with a rank two logarithmic flat $\mathfrak{sl}_2(\CC)$--connection over $\PP^2-\Qr$ is non--degenerate if
\begin{itemize}
\item its image forms an irreducible subgroup of $SL_2(\CC)$ ; 
\item its local monodromy (see Definition \ref{def:locMon}) around any irreducible component of $\Qr$ is projectively non--trivial (i.e is non-trivial in $PSL_2(\CC)$).
\end{itemize}
\end{defi}

In order to establish the existence of such representations, we use the following result by Degtyarev.

\begin{prop}\label{th:Degtyarev}{ \rm \textbf{[Degtyarev, 1999 \cite{Degtyarev}] }}
The fundamental group $\Gamma$ of the complement of a smooth conic and three tangent lines in $\PP^2$ admits the following presentation:
\begin{displaymath}
\Gamma \cong \langle a,b,c \, | \, (ab)^2 (ba)^{-2} = (ac)^2 (ca)^{-2}= [b,c]=1 \rangle \;  .
\end{displaymath}
\end{prop}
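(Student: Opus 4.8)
The plan is to compute $\Gamma = \pi_1(\PP^2 \setminus \Qr')$, where $\Qr' = \Cr \cup L_1 \cup L_2 \cup L_3$ denotes a smooth conic together with three generic tangent lines, by means of the Zariski--van Kampen theorem applied to a generic pencil of lines. Since this fundamental group is a topological invariant of the configuration, I would first normalise $\Qr'$ to general position, so that its only singularities are the three tangency points $T_i = \Cr \cap L_i$, each an ordinary tacnode (type $A_3$, two smooth branches with contact of order two), and the three double points $P_{ij} = L_i \cap L_j$, each an ordinary node (type $A_1$). As a preliminary sanity check I would record the abelianisation: $H_1(\PP^2 \setminus \Qr') \cong \ZZ^4 / (2\gamma_{\Cr} + \gamma_1 + \gamma_2 + \gamma_3)$ is free of rank $3$, generated by the meridians of $\Cr$ and of two of the lines, which already predicts the three generators $a, b, c$ of the claimed presentation.

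Second, I would fix a generic projection centre $p_0 \notin \Qr'$ and consider the associated pencil of lines. A generic member $\ell$ meets $\Qr'$ in five distinct points, so $\pi_1(\ell \setminus \Qr')$ is free on five meridians $x_1, \dots, x_5$; the van Kampen method presents $\Gamma$ as this free group modulo the braid relations read off from the special fibres of the pencil, together with the projective relation $x_1 x_2 x_3 x_4 x_5 = 1$ coming from the line at infinity. The special fibres are of three kinds: the two lines through $p_0$ simply tangent to $\Cr$, each of which collides the two conic-meridians and identifies them, up to conjugacy, into a single class $a$; the three fibres through the tacnodes $T_i$, each contributing the characteristic $A_3$-relation $(a m_i)^2 = (m_i a)^2$ between the conic-meridian and the meridian $m_i$ of $L_i$; and the three fibres through the nodes $P_{ij}$, each contributing a commutation relation $[m_i, m_j] = 1$.

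Third, I would compute the global braid monodromy explicitly: choosing an ordered system of paths from the basepoint to the critical values of the pencil, I would transport the five meridians around each loop and record the induced automorphism of the free group. This is the technical heart of the argument --- the local models above only give the conjugacy classes of the relations, whereas the actual relations depend on the connecting paths, so the bookkeeping of how $x_1, \dots, x_5$ are conjugated between consecutive critical fibres must be done with care and is where errors are easiest to make. Once the monodromy is in hand, the resulting presentation has five generators and eight relations (two tangency, three tacnode, three node), together with the relation at infinity.

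Finally, I would simplify by Tietze transformations. Writing $a$ for the (now single) conic-meridian and $b, c, d$ for the three line-meridians $m_1, m_2, m_3$, the relation at infinity expresses $d$ as a word in $a, b, c$, allowing its elimination; the three node relations reduce to the pairwise commutativity of $b, c, d$, of which $[b,c] = 1$ survives while the two involving $d$ become redundant after substitution; and of the three tacnode relations $(ab)^2 = (ba)^2$, $(ac)^2 = (ca)^2$, $(ad)^2 = (da)^2$, the first two are retained while the third is checked to follow from the others together with $[b,c]=1$ and the expression for $d$. This leaves exactly the presentation $\Gamma \cong \langle a, b, c \mid (ab)^2(ba)^{-2} = (ac)^2(ca)^{-2} = [b,c] = 1 \rangle$ of the statement. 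The main obstacle is thus the explicit determination of the braid monodromy, followed by the verification that the apparently missing third tacnode relation is genuinely a consequence of the retained ones.
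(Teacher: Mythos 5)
The paper contains no proof of this proposition: it is quoted verbatim from Degtyarev, and the surrounding text only records the shape of his argument, namely a Zariski--van Kampen computation on a generic pencil, with $\Gamma$ generated by the five meridians $d_1,\dots,d_5$ of a generic line section modulo braid-monodromy relations. Your plan is exactly that computation, and your structural data are correct: the singularities of the configuration are three tacnodes and three nodes, a generic projection centre lies on exactly two tangent lines of the conic, the local relations are the identification of two conic meridians, $(am_i)^2=(m_ia)^2$, and a commutation, respectively, and your enumeration of special fibres is confirmed by the length count $2\cdot 1+3\cdot 4+3\cdot 2=20=5\cdot 4$, the length of the full twist in the braid group $B_5$.

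However, your final step contains a genuine gap, and as literally written it is wrong. You correctly note in step three that van Kampen only hands you relations decorated by conjugating words depending on the connecting paths, but in step four you drop the decorations and work with the naive representatives, asserting that the node relations are $[b,c]=[b,d]=[c,d]=1$ for the single element $d=(abac)^{-1}$ eliminated by the relation at infinity. This is false in $\Gamma$. One of the two is fine: $[b,abac]=1$ follows from the retained relations, since $b^{-1}(abac)b=b^{-1}aba(cb)=b^{-1}(abab)c=b^{-1}(baba)c=abac$. But $[c,abac]=1$ is equivalent to $c(aba)=(aba)c$, and this already fails in the Coxeter quotient of type $\tilde C_2$ obtained by adding $a^2=b^2=c^2=1$ (the relations become $(ab)^4=(ac)^4=(bc)^2=1$): there $aba$ and $c$ are reflections in two distinct parallel lines of the square tiling, and such reflections never commute. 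The relation that actually holds, and is derivable, is the commutation of $c$ with a \emph{different} conjugate of the line-3 meridian, namely $[c,acab]=1$ where $acab=(ab)^{-1}(abac)(ab)$; indeed $c^{-1}(acab)c=c^{-1}aca(bc)c^{-1}\cdot c=c^{-1}(acac)b=c^{-1}(caca)b=acab$. So if your braid monodromy bookkeeping were to output the undecorated relations of your step four, the resulting presentation would define a proper quotient of $\Gamma$, not $\Gamma$. The explicit determination of the conjugators is therefore not merely "where errors are easiest to make"; it is the actual content of the proof, and without carrying it out (or citing Degtyarev, as the paper does) the reduction to the stated three-relation presentation cannot be justified.
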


\begin{center}
\begin{figure}

\scalebox{0.6} 
{
\begin{pspicture}(0,-3.85)(17.841875,3.85)
\definecolor{color3}{rgb}{0.00392156862745098,0.00392156862745098,0.00392156862745098}
\definecolor{color18}{rgb}{0.0196078431372549,0.611764705882353,0.00392156862745098}
\definecolor{color36}{rgb}{0.0392156862745098,0.9529411764705882,0.09803921568627451}
\definecolor{color20}{rgb}{0.19215686274509805,0.6274509803921569,0.00392156862745098}
\pscircle[linewidth=0.04,linecolor=red,dimen=outer](6.53,-0.28){1.79}
\psline[linewidth=0.04cm,linecolor=color3](0.16,-3.83)(7.66,3.23)
\psline[linewidth=0.04cm,linecolor=color3](4.32,3.83)(11.12,-1.71)
\psline[linewidth=0.04cm,linecolor=color3](0.0,-3.47)(13.8,-0.57)
\psline[linewidth=0.04cm,linecolor=color18](14.3,-0.65)(1.44,-0.85)
\usefont{T1}{ptm}{m}{n}
\rput(4.7042184,-1.9){\color{red}$\Cr$}
\usefont{T1}{ptm}{m}{n}
\rput(11.3142185,-0.445){\color{color36}$L$}
\pscustom[linewidth=0.04]
{
\newpath
\moveto(9.02,-0.73)
\lineto(8.96,-0.83)
\curveto(8.93,-0.88)(8.88,-0.975)(8.86,-1.02)
\curveto(8.84,-1.065)(8.795,-1.17)(8.77,-1.23)
\curveto(8.745,-1.29)(8.705,-1.39)(8.69,-1.43)
\curveto(8.675,-1.47)(8.65,-1.56)(8.64,-1.61)
\curveto(8.63,-1.66)(8.62,-1.75)(8.62,-1.79)
\curveto(8.62,-1.83)(8.62,-1.915)(8.62,-1.96)
\curveto(8.62,-2.005)(8.635,-2.085)(8.65,-2.12)
\curveto(8.665,-2.155)(8.71,-2.2)(8.74,-2.21)
\curveto(8.77,-2.22)(8.825,-2.23)(8.85,-2.23)
\curveto(8.875,-2.23)(8.92,-2.225)(8.94,-2.22)
\curveto(8.96,-2.215)(9.0,-2.19)(9.02,-2.17)
\curveto(9.04,-2.15)(9.075,-2.105)(9.09,-2.08)
\curveto(9.105,-2.055)(9.13,-1.995)(9.14,-1.96)
\curveto(9.15,-1.925)(9.16,-1.86)(9.16,-1.83)
\curveto(9.16,-1.8)(9.155,-1.75)(9.15,-1.73)
\curveto(9.145,-1.71)(9.12,-1.66)(9.1,-1.63)
}
\pscustom[linewidth=0.04]
{
\newpath
\moveto(9.02,-1.51)
\lineto(8.95,-1.47)
\curveto(8.915,-1.45)(8.86,-1.425)(8.84,-1.42)
\curveto(8.82,-1.415)(8.785,-1.4)(8.77,-1.39)
}
\pscustom[linewidth=0.04]
{
\newpath
\moveto(9.02,-0.71)
\lineto(9.04,-0.63)
\curveto(9.05,-0.59)(9.06,-0.52)(9.06,-0.49)
\curveto(9.06,-0.46)(9.065,-0.395)(9.07,-0.36)
\curveto(9.075,-0.325)(9.085,-0.27)(9.09,-0.25)
\curveto(9.095,-0.23)(9.105,-0.19)(9.11,-0.17)
\curveto(9.115,-0.15)(9.115,-0.105)(9.11,-0.08)
\curveto(9.105,-0.055)(9.105,0.0)(9.11,0.03)
\curveto(9.115,0.06)(9.125,0.11)(9.13,0.13)
\curveto(9.135,0.15)(9.145,0.19)(9.15,0.21)
\curveto(9.155,0.23)(9.18,0.26)(9.2,0.27)
\curveto(9.22,0.28)(9.265,0.295)(9.29,0.3)
\curveto(9.315,0.305)(9.355,0.31)(9.37,0.31)
\curveto(9.385,0.31)(9.425,0.305)(9.45,0.3)
\curveto(9.475,0.295)(9.52,0.27)(9.54,0.25)
\curveto(9.56,0.23)(9.6,0.195)(9.62,0.18)
\curveto(9.64,0.165)(9.675,0.13)(9.69,0.11)
\curveto(9.705,0.09)(9.715,0.045)(9.71,0.02)
\curveto(9.705,-0.005)(9.695,-0.06)(9.69,-0.09)
\curveto(9.685,-0.12)(9.665,-0.17)(9.65,-0.19)
\curveto(9.635,-0.21)(9.605,-0.24)(9.59,-0.25)
\curveto(9.575,-0.26)(9.545,-0.28)(9.53,-0.29)
}
\pscustom[linewidth=0.04]
{
\newpath
\moveto(9.36,-0.37)
\lineto(9.29,-0.38)
\curveto(9.255,-0.385)(9.195,-0.395)(9.17,-0.4)
\curveto(9.145,-0.405)(9.09,-0.42)(9.06,-0.43)
}
\pscustom[linewidth=0.04]
{
\newpath
\moveto(9.02,-0.71)
\lineto(8.95,-0.67)
\curveto(8.915,-0.65)(8.86,-0.61)(8.84,-0.59)
\curveto(8.82,-0.57)(8.78,-0.53)(8.76,-0.51)
\curveto(8.74,-0.49)(8.695,-0.455)(8.67,-0.44)
\curveto(8.645,-0.425)(8.59,-0.39)(8.56,-0.37)
\curveto(8.53,-0.35)(8.485,-0.325)(8.47,-0.32)
\curveto(8.455,-0.315)(8.425,-0.3)(8.41,-0.29)
\curveto(8.395,-0.28)(8.355,-0.27)(8.33,-0.27)
\curveto(8.305,-0.27)(8.255,-0.27)(8.23,-0.27)
\curveto(8.205,-0.27)(8.16,-0.28)(8.14,-0.29)
\curveto(8.12,-0.3)(8.09,-0.325)(8.08,-0.34)
\curveto(8.07,-0.355)(8.05,-0.39)(8.04,-0.41)
\curveto(8.03,-0.43)(8.035,-0.465)(8.05,-0.48)
\curveto(8.065,-0.495)(8.105,-0.525)(8.13,-0.54)
\curveto(8.155,-0.555)(8.22,-0.58)(8.26,-0.59)
}
\pscustom[linewidth=0.04]
{
\newpath
\moveto(8.32,-0.61)
\lineto(8.37,-0.61)
\curveto(8.395,-0.61)(8.445,-0.605)(8.47,-0.6)
\curveto(8.495,-0.595)(8.535,-0.58)(8.55,-0.57)
\curveto(8.565,-0.56)(8.595,-0.54)(8.61,-0.53)
\curveto(8.625,-0.52)(8.67,-0.5)(8.7,-0.49)
}
\psdots[dotsize=0.12,linecolor=color3,fillstyle=solid,dotstyle=o](9.9,-0.71)
\psdots[dotsize=0.12,linecolor=color3,fillstyle=solid,dotstyle=o](8.22,-0.73)
\psdots[dotsize=0.12,linecolor=color3,fillstyle=solid,dotstyle=o](4.84,-0.79)
\psdots[dotsize=0.12,linecolor=color3,fillstyle=solid,dotstyle=o](3.36,-0.83)
\usefont{T1}{ptm}{m}{n}
\rput(8.58422,-0.125){$a$}
\usefont{T1}{ptm}{m}{n}
\rput(9.394218,-1.765){$b$}
\usefont{T1}{ptm}{m}{n}
\rput(9.944219,0.175){$c$}
\psframe[linewidth=0.04,linecolor=color20,dimen=outer](17.12,3.37)(14.74,-2.77)
\psdots[dotsize=0.12,linecolor=color3,fillstyle=solid,dotstyle=o](13.36,-0.67)
\psbezier[linewidth=0.04](15.98,0.23)(15.98,-0.57)(16.209753,-0.556751)(15.98,-1.53)(15.750247,-2.503249)(16.910694,-1.8503163)(15.94,-1.61)
\psbezier[linewidth=0.04](15.986626,0.19675101)(15.986626,0.78128284)(15.742557,0.7754422)(15.986626,1.4827211)(16.230696,2.19)(15.08,1.57)(16.0,1.47)
\psbezier[linewidth=0.04](16.0,-0.13)(15.930369,0.66556793)(16.84,1.63)(16.291744,2.4861145)(15.743488,3.342229)(14.78,2.03)(16.26,2.53)
\psbezier[linewidth=0.04](15.991281,0.10603877)(15.836095,-0.21)(16.030344,-0.4149751)(15.275172,-0.17836133)(14.52,0.05825244)(15.026629,-1.1368483)(15.213098,-0.13716686)
\psbezier[linewidth=0.04](15.98,0.09)(16.600689,-0.29)(17.18,-0.19)(16.719833,0.34755182)(16.259666,0.88510364)(17.0,1.21)(16.784048,0.30635735)
\psdots[dotsize=0.12,fillstyle=solid,dotstyle=o](15.82,2.59)
\psdots[dotsize=0.12,fillstyle=solid,dotstyle=o](15.86,1.67)
\psdots[dotsize=0.12,fillstyle=solid,dotstyle=o](15.02,-0.31)
\psdots[dotsize=0.12,fillstyle=solid,dotstyle=o](16.72,0.63)
\psdots[dotsize=0.12,fillstyle=solid,dotstyle=o](16.18,-1.85)
\usefont{T1}{ptm}{m}{n}
\rput(14.274219,-2.625){\color{color36}$L$}
\usefont{T1}{ptm}{m}{n}
\rput(15.181406,2.895){$d_5$}
\usefont{T1}{ptm}{m}{n}
\rput(15.292813,1.955){$d_3$}
\usefont{T1}{ptm}{m}{n}
\rput(14.981406,0.2){$d_4$}
\usefont{T1}{ptm}{m}{n}
\rput(15.581407,-1.765){$d_1$}
\usefont{T1}{ptm}{m}{n}
\rput(16.341406,0.735){$d_2$}
\usefont{T1}{ptm}{m}{n}
\rput(10.151406,-0.45){$4$}
\usefont{T1}{ptm}{m}{n}
\rput(13.3,-0.35){$1$}
\usefont{T1}{ptm}{m}{n}
\rput(8.4,-0.91){$2$}
\usefont{T1}{ptm}{m}{n}
\rput(5.1914062,-0.95){$3$}
\usefont{T1}{ptm}{m}{n}
\rput(3.3,-0.585){$5$}
\end{pspicture} 
}

\caption{Fundamental group of the complement of the quintic $\Qr$ in $\PP^2$ and restriction to a generic line.}
\label{fig:Pi_1_gen}
\end{figure}
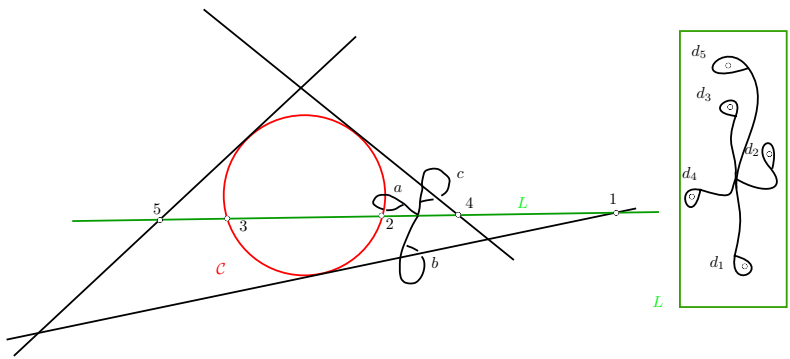

\end{center}

More precisely, Degtyarev proves that we can take $a$ (resp. $b,c$) to be a loop realising the local monodromy (see Definition \ref{def:locMon}) around the conic $\Cr := (x^2 + y^2 + t^2 - 2(xy+xt+yt)=0)$  (resp. the lines $(y=0)$, $(x=0)$), as illustrated in the left--hand side of Fig. \ref{fig:Pi_1_gen}. Also note that the fundamental group of the intersection of $\PP^2-\Qr$ with any generic line is isomorphic to the free group $\Fb_4 := \langle d_1, \ldots, d_5 \, | \, d_1 \ldots d_5 = 1 \rangle$; the Lefschetz hyperplane theorem (see \cite{Milnor}, Theorem 7.4) tells us that the natural morphism $\tau : \Fb_4 \rightarrow \Gamma$ is onto. Moreover we know from the explicit Van Kampen method given in  Subsection 4.1 of \cite{Degtyarev} that the group $\Gamma$ can be computed by taking the four free generators of the fundamental group of the intersection of $\PP^2 - \Qr$ with any generic line and adding some  braid monodromy relations. Thus, if we chose a line going through the base point used to define $a,b$ and $c$ then $\tau$ is given (up to a permutation of the $d_i$) by (see the right--hand side of Fig. \ref{fig:Pi_1_gen}):
\begin{align*}
d_1 & \mapsto  b \\
d_2 & \mapsto a\\
d_3 & \mapsto b a b^{-1}\\
d_4 & \mapsto c\\
d_5 & \mapsto (abac)^{-1} \; .
\end{align*} 
In particular, any non--degenerate representation $\rho$ of $\Gamma$ must satisfy $$\rho(a), \rho(b) , \rho(c), \rho(abac) \neq \pm I_2 \; . $$

\begin{prop}\label{prop:repres}
The only (up to conjugacy) family of non--degenerate representations of $\Gamma$ into $SL_2(\CC)$ is as follows:
$$\rho_{u,v} : a  \mapsto \begin{pmatrix}
0 & 1 \\ 
-1 & 0
\end{pmatrix}  , \quad 
b  \mapsto \begin{pmatrix}
u & 0 \\ 
0 & u^{-1}
\end{pmatrix}, \quad
c  \mapsto \begin{pmatrix}
v & 0 \\ 
0 & v^{-1}
\end{pmatrix} \; , \text{ for }  u,v \in \CC^* \;.$$
\end{prop}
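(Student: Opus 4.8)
The plan is to translate the three defining relations of $\Gamma$ into matrix identities for $A := \rho(a)$, $B := \rho(b)$, $C := \rho(c)$ in $SL_2(\CC)$ and then exploit the rigidity of centralisers of non--central elements. First I would record the algebraic content of each relation. Since $BA = A^{-1}(AB)A = B(AB)B^{-1}$, the relation $(ab)^2=(ba)^2$ is equivalent to saying that $(AB)^2$ commutes with \emph{both} $A$ and $B$; likewise $(ac)^2=(ca)^2$ says $(AC)^2$ commutes with both $A$ and $C$; and $[b,c]=1$ reads $BC=CB$.

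The central observation is that the centraliser of any non--central element of $SL_2(\CC)$ is abelian (a maximal torus in the semisimple case, the unipotent subgroup together with $\{\pm I_2\}$ in the parabolic case). Hence if $(AB)^2\neq\pm I_2$, then $A$ and $B$ both lie in the abelian centraliser of $(AB)^2$, so $[A,B]=1$. I would then show this forces reducibility: combining $[A,B]=1$ with $[B,C]=1$ and $B\neq\pm I_2$ places $A,B,C$ in the centraliser of $B$, an abelian torus or parabolic, so they share an eigenvector. This contradiction yields $(AB)^2=\pm I_2$, and a short Cayley--Hamilton argument (an element of $SL_2(\CC)$ squaring to $I_2$ is $\pm I_2$, forcing $B=\pm A^{-1}$ and again reducibility) upgrades this to $(AB)^2=-I_2$, i.e. $\mathrm{tr}(AB)=0$. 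The identical argument gives $\mathrm{tr}(AC)=0$.

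With $\mathrm{tr}(AB)=\mathrm{tr}(AC)=0$ and $BC=CB$ in hand, I would normalise using $B,C$, splitting on the type of $B$. If $B$ is regular semisimple, conjugate so that $B=\mathrm{diag}(u,u^{-1})$ with $u\neq\pm1$, whence $C=\mathrm{diag}(v,v^{-1})$ lies in the same torus. Writing $A=\begin{pmatrix}p&q\\r&s\end{pmatrix}$, the two trace conditions give $s=-pu^2=-pv^2$, so $p(u^2-v^2)=0$; when $u^2\neq v^2$ this forces $p=s=0$, so $A$ is anti--diagonal, and a further conjugation by a diagonal matrix (preserving $B,C$) normalises $A$ to $\begin{pmatrix}0&1\\-1&0\end{pmatrix}$, giving exactly $\rho_{u,v}$. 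The remaining possibility $u^2=v^2$ means $C=\pm B$, whence $ABAC=\pm(AB)^2=\mp I_2$, ruled out by the non--degeneracy condition $\rho(abac)\neq\pm I_2$. If instead $B$ is parabolic, then $C$ is a signed unipotent with the same fixed point, and the trace conditions force either $C=\pm B$ (excluded as above) or $A$ upper triangular, so $A,B,C$ share an eigenvector, contradicting irreducibility; hence this case does not occur.

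The main obstacle I anticipate is not any single computation but the bookkeeping of degenerate subcases: one must check that every way of failing to land in the stated family is obstructed by exactly one clause of non--degeneracy. The most delicate point is the role of the fourth generator: the condition $\rho(abac)\neq\pm I_2$ (coming from the third tangent line via $d_5\mapsto(abac)^{-1}$) is precisely what eliminates the degenerate locus $v=\pm u$, and computing $ABAC=\mathrm{diag}(-u^{-1}v,-uv^{-1})$ for the normalised form makes this transparent.
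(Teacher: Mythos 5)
Your proof is correct and takes essentially the same route as the paper's: the paper likewise derives $(AB)^2=(AC)^2=-I_2$ from the abelian-centraliser rigidity of non-central elements of $SL_2(\CC)$ together with non-degeneracy, normalises the commuting pair $B,C$ to simultaneous triangular/diagonal form, splits on whether $B$ is regular semisimple or a signed unipotent, and uses $\rho(abac)\neq \pm I_2$ to exclude $C=\pm B$ and irreducibility to exclude the triangular configurations. The only differences are cosmetic: you obtain both trace conditions symmetrically up front, where the paper fixes a non-commuting pair WLOG and writes the condition as $AB=-B^{-1}A^{-1}$ rather than $\mathrm{tr}(AB)=0$.
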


\subsection{Main results}

The core of this paper will be devoted to proving the following theorem, in which we explicitly construct the announced family of rank two logarithmic flat connections.

\begin{theoa}\label{TheoA}
There exists an \emph{explicit} two--parameter family $\nabla_{\lambda_0, \lambda_1}$ of logarithmic flat connections over the trivial rank two vector bundle $\CC^2 \times \PP^2 \rightarrow \PP^2$ with the following properties:
\begin{enumerate}
\item[(i)] the polar locus of $\nabla_{\lambda_0, \lambda_1}$ is equal to the quintic $\Qr\in \PP^2$ and as such does not depend on $\lambda_0, \lambda_1 \in \CC$;
\item[(ii)] the monodromy of $\nabla_{\lambda_0, \lambda_1}$ is conjugated to $\rho_{u,v}$ with $u = -e^{-i \pi \lambda_0}$ and $v = e^{-i \pi \lambda_1}$. It is a virtually abelian dihedral representation of the fundamental group $\Gamma:=\pi_1(\PP^2-\Qr)$ into $SL_2(\CC)$ whose image is not Zariski--dense.
\end{enumerate}
The connection $\nabla_{\lambda_0, \lambda_1}$ is given in some (see Subsection \ref{sec:Rk2VB}) affine chart $\CC^2_{x,y} \subset	\PP^2$ by:
\begin{displaymath}
\nabla_{\lambda_0,\lambda_1} = \dd - \dfrac{1}{2(x^2+y^2+1-2(xy+x+y))} (\lambda_0 A_0 + \lambda_1	A_1 + A_2) \; , 
\end{displaymath}
where\tiny
\begin{displaymath}
A_0 := \begin{pmatrix}
2(x-1)y\dd x + (x^2 + x(y-2) - y +1)x \frac{\dd y}{y} & 2(2x - y +2)y \dd x + (2x^2 +y(x-y+3)-2)x \frac{\dd y}{y} \\ 
-2y^2 \dd x + (x+y-1)x^2\frac{\dd y}{y}  & -2(x-1)y\dd x - (x^2 + x(y-2) - y +1)x\frac{\dd y}{y}
\end{pmatrix} 
\end{displaymath}
\begin{displaymath}
A_1 := \begin{pmatrix}
(x^2+(x-1)(y-1))y \frac{\dd x}{x} + 2(x-1)x \dd y & (x^2 + y(x-y+3) -2)y \frac{\dd x}{x} + 2(2x-y+2)x \dd y \\ 
-(x+y-1)y^2 \frac{\dd x}{x}-2x^2 \dd y & -(x^2+(x-1)(y-1))y \frac{\dd x}{x} - 2(x-1)x \dd y
\end{pmatrix} 
\end{displaymath}
\begin{displaymath}
A_2 := \begin{pmatrix}
-(x+y+1)y \dd x - (x^2-x(y+2)-y+1)x \frac{\dd y}{y} & -2(x-y+3)y \dd x  -(x^2-2y(x+1)+1) x \frac{\dd y}{y}\\ 
0 & (x+y+1)y \dd x + (x^2-x(y+2)-y+1)x \frac{\dd y}{y}
\end{pmatrix} \; .
\end{displaymath}
\normalsize
\end{theoa}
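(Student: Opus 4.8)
Since the family is written down explicitly, the only genuine content is flatness together with assertions (i) and (ii). Set $P := x^2+y^2+1-2(xy+x+y)$ and $\omega_i := \tfrac{1}{2P}A_i$, so that $\nabla_{\lambda_0,\lambda_1} = \dd - \Omega$ with $\Omega = \lambda_0\omega_0 + \lambda_1\omega_1 + \omega_2$. The curvature $\dd\Omega - \Omega\wedge\Omega$ is then a polynomial of degree two in $(\lambda_0,\lambda_1)$, and I would first prove flatness by checking the vanishing of each of its coefficients separately: the purely quadratic terms require $\omega_i\wedge\omega_j + \omega_j\wedge\omega_i = 0$ for $i,j\in\{0,1\}$, the linear terms require $\dd\omega_i = \omega_i\wedge\omega_2 + \omega_2\wedge\omega_i$ for $i\in\{0,1\}$, and the constant term requires $\dd\omega_2 = \omega_2\wedge\omega_2$. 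As the $A_i$ are explicit $\mathfrak{sl}_2(\CC)$--valued one--forms with polynomial entries over the common denominator $2P$, this amounts to a finite list of rational--function identities that I would confirm by direct (computer--assisted) expansion.

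Next I would pin down the polar locus. In the chart $\CC^2_{x,y}$ the matrices $A_0, A_2$ involve $\tfrac{\dd y}{y}$ while $A_1$ involves $\tfrac{\dd x}{x}$, and all three are divided by $2P$; hence $\Omega$ has at worst simple poles along $x=0$, $y=0$ and the conic $\Cr = (P=0)$, the fourth component $t=0$ of $\Qr$ being the line at infinity, which I would handle in a second affine chart. Computing the residue matrices $R_{\{x=0\}}$, $R_{\{y=0\}}$, $R_\Cr$ and $R_{\{t=0\}}$ shows at once that each is nonzero and that every pole is simple, so the polar divisor is exactly $\Qr$, giving (i). Note that $P|_{(y=0)} = (x-1)^2$ and $P|_{(x=0)} = (y-1)^2$, confirming the tangencies; and since $\tfrac{\dd x}{x}$ occurs only in $\lambda_1 A_1$ whereas $\tfrac{\dd y}{y}$ occurs in both $\lambda_0 A_0$ and the constant piece $A_2$, the residue along $x=0$ depends only on $\lambda_1$ while that along $y=0$ carries an extra constant shift coming from $A_2$ -- this asymmetry is what produces the minus sign in $u = -e^{-i\pi\lambda_0}$ against the clean $v = e^{-i\pi\lambda_1}$.

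For (ii), the monodromy of $\nabla_{\lambda_0,\lambda_1}$ is a representation of $\Gamma = \pi_1(\PP^2-\Qr)$ into $SL_2(\CC)$, so by Proposition~\ref{prop:repres} it suffices to prove it is non--degenerate and then read off the parameters. The local monodromy around each component is conjugate to $\exp(2i\pi R_\bullet)$; the residue eigenvalues I expect to extract from the previous step are $\pm\tfrac14$ along $\Cr$ (local monodromy of order four, the class of $\left(\begin{smallmatrix} 0 & 1 \\ -1 & 0\end{smallmatrix}\right)$), $\pm\tfrac{1-\lambda_0}{2}$ along $y=0$ (eigenvalues $-e^{\mp i\pi\lambda_0}$), $\pm\tfrac{\lambda_1}{2}$ along $x=0$ (eigenvalues $e^{\mp i\pi\lambda_1}$), and $\pm\tfrac{\lambda_0-\lambda_1}{2}$ along $t=0$, matching the eigenvalues of $\rho_{u,v}(abac)=\mathrm{diag}(-u^{-1}v,-uv^{-1})$. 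These local monodromies are projectively non--trivial, and assuming irreducibility the representation is non--degenerate, hence conjugate to some $\rho_{u,v}$; matching the line eigenvalues with those of $\rho_{u,v}(b)=\mathrm{diag}(u,u^{-1})$ and $\rho_{u,v}(c)=\mathrm{diag}(v,v^{-1})$ then forces $u = -e^{-i\pi\lambda_0}$ and $v = e^{-i\pi\lambda_1}$. Finally the image lies in the normaliser of the diagonal torus, a proper algebraic subgroup of $SL_2(\CC)$, so it is dihedral, virtually abelian and not Zariski--dense.

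The main obstacle is the step I glossed as ``assuming irreducibility'': the residues only determine the local monodromies up to conjugacy, so nothing said so far excludes a reducible assembly of the same local data, nor does it directly exhibit the global matrices. I would resolve this by exploiting the dihedral structure, pulling $\nabla_{\lambda_0,\lambda_1}$ back by the double cover of $\PP^2$ ramified along $\Cr$, on which the connection should split as $\Lr\oplus\Lr^{-1}$ for a rank one logarithmic connection $\Lr$ whose abelian monodromy is determined by its residues alone. Downstairs this forces the image into the torus normaliser, with the loop $a$ around $\Cr$ -- which crosses the ramification once -- acting as the eigenline swap $\left(\begin{smallmatrix} 0 & 1 \\ -1 & 0\end{smallmatrix}\right)$ and thereby genuinely obstructing a common eigenvector; this both yields irreducibility and reconstructs the global representation, confirming the signs above. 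Equivalently, one may integrate the restriction of $\nabla_{\lambda_0,\lambda_1}$ to a generic line, a Fuchsian system on the five--punctured sphere whose monodromy surjects onto that of $\nabla_{\lambda_0,\lambda_1}$ by the Lefschetz surjectivity of $\tau$ recalled above. For the special parameters where some local monodromy degenerates, I would extend the conclusion by the holomorphic dependence of the monodromy on $(\lambda_0,\lambda_1)$.
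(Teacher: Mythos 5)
Your strategy is sound, and its essential idea is the paper's own mechanism run in the opposite direction. The paper proceeds top--down: it starts from the rank--one logarithmic connection $\nabla_0=\dd+\tfrac{1}{2}\mathrm{diag}(\omega_0,-\omega_0)$ on $X=\PP^1\times\PP^1$, descends the $\eta$--invariant Riccati foliation through the $2{:}1$ cover $\pi\colon X\to\PP^2$ ramified over $\Cr$, and then trivialises the resulting bundle by an explicit birational modification along $(y=0)$. On that route flatness costs nothing (it is inherited from closed rank--one forms), the polar locus comes from the construction together with the residue table, and the monodromy is obtained directly in Proposition \ref{prop:globMonod}: the diagonal generators come from $\nabla_0$ and the swap from the deck relation $z(u_1,u_0)=1/z(u_0,u_1)$. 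Your bottom--up verification --- coefficient--wise vanishing of the curvature in $(\lambda_0,\lambda_1)$, residues for (i), then pulling back by $\pi$ so that the connection splits as $\Lr\oplus\Lr^{-1}$ with the deck involution exchanging the eigenlines --- is a legitimate alternative, and you correctly identify and repair the one genuine gap in the naive residue argument: local exponents alone cannot exclude a reducible assembly of the same local data, nor distinguish the relative orientations of the diagonal generators. Your proposed fix is exactly Proposition \ref{prop:globMonod} in disguise. What your route costs is the computer algebra for flatness and the need to actually exhibit the two invariant sections upstairs (the step you flag as ``should split''), which amounts to checking that two explicit $\eta$--conjugate rational sections over $X$ solve the pulled--back Riccati equation; what it buys is a certification of the printed matrices themselves, independent of how they were found.

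One concrete assertion in your third paragraph would not survive contact with the computation. You ``expect'' residue eigenvalues $\pm\tfrac{\lambda_0-\lambda_1}{2}$ along $t=0$; this is reverse--engineered from $\rho_{u,v}(abac)=\mathrm{diag}(-u^{-1}v,-uv^{-1})$ with the printed $(u,v)$, not computed. The paper's computation gives $\pm\tfrac{\lambda_0+\lambda_1}{2}$ there (Table \ref{table:Res_N3}, confirmed by Table \ref{table:Res_Nab}), and this is also what the covering predicts: a meridian of $L_\infty$ lifts to a meridian of $(u_0=\infty)$, where $\nabla_0$ has residue eigenvalues $\mp\tfrac{\lambda_0+\lambda_1}{2}$. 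The point is not a harmless sign: since $\rho_{u,v}$ and $\rho_{u,v^{-1}}$ are \emph{not} conjugate for generic $(u,v)$, matching the residues along $(x=0)$ and $(y=0)$ alone leaves exactly the ambiguity $v\leftrightarrow v^{-1}$, and it is the $L_\infty$ datum that resolves it; with the computed value one lands on $v=e^{+i\pi\lambda_1}$ rather than $e^{-i\pi\lambda_1}$. (This discrepancy is internal to the paper as well: Proposition \ref{prop:globMonod} and Table \ref{table:monodromy} are incompatible with Tables \ref{table:Res_N3} and \ref{table:Res_Nab} at the column $x=\infty$.) So your matching step must be phrased against the residue you actually compute, and the identification of $v$ in (ii) adjusted accordingly; the structural conclusions --- dihedral image inside the torus normaliser, virtually abelian, not Zariski--dense, polar locus $\Qr$ --- are unaffected.
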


\begin{rema}
Note that the existence, and uniqueness up to gauge transformation, of such a family of connections follows from Proposition \ref{prop:repres} and the classical Riemann--Hilbert correspondence. The original part of this work resides in the fact that we give a constructive proof of this result; in particular this allows us to describe the associated algebraic Garnier solution.
\end{rema}

Since $\PP^2$ is the symmetric product $\mathrm{Sym}^2(\PP^1)$ one has a natural two--fold ramified covering $\pi : \PP^1 \times \PP^1 \xrightarrow[]{2:1} \PP^2$ which pulls the quintic $\Qr$ back onto the subset $D \subset X := \PP^1 \times \PP^1$ composed of the six lines $u_0, u_1 = 0,1,\infty$ (for some pair $(u_0,u_1)$ of projective coordinates on $X$) and of the diagonal $\Delta$ while ramifying over the latter (see Subsection \ref{sec:Rk2VB} for more details). As we are aiming at dihedral monodromy, a natural idea to prove Theorem \ref{TheoA} is to define a family of rank one logarithmic flat connections over $X$ with infinite monodromy around $D\setminus\Delta$ and to push it forward using $\pi$ to get a family of such connections over $\PP^2 - \Qr$ with monodromy of (generically) infinite order around the three lines in the quintic and of projective order two at the conic $\Cr$. This is exactly what we will do in Section \ref{sec:proofTheoA}.

Representations of fundamental groups of quasi-projective varieties in $SL_2(\CC)$ have been classified mainly by Corlette and Simpson \cite{CorSim}. One important class of such representations is that of those factoring through a curve.

\begin{defi}\label{def:factorCurve}  \cite{CorSim, LTP}
We say that a representation $\rho : \Gamma \rightarrow SL_2(\CC)$ \emph{factors through a curve} if there exists a complex projective curve $C$, a divisor $\delta \subset C$, an algebraic mapping $f : \PP^2 -f^{-1}(\delta) \rightarrow C - \delta$ and a representation $\tilde{\rho}$ of the fundamental group of $C-\delta$ into $PSL_2(\CC)$ such that
\begin{enumerate}
\item[(i)]  $f^{-1}(\delta)$ contains $\Qr$, therefore there exists a natural group homomorphism $m : \pi_1(\PP^2 - f^{-1}(\delta)) \rightarrow \Gamma$;
\item[(ii)] the diagram
\begin{displaymath}
\xymatrix{
\pi_1(C-\delta)  \ar[rd]_{\tilde{\rho}} & \ar[l]_{f_*} \pi_1(\PP^2 - f^{-1}(\delta)) \ar[d]^{\mathrm{P}\circ\rho\circ m}\\
 & PSL_2(\CC) 
}
\end{displaymath}
commutes, where $\mathrm{P}$ is the natural morphism $SL_2(\CC) \rightarrow PSL_2(\CC)$.
\end{enumerate}

\end{defi}

Indeed, representations admitting such a factorisation can be obtained through pullback from the monodromy of some logarithmic flat connection on a curve.  We prove that this is not generically the case for our family of connections.

\begin{theoa}\label{TheoB}
The monodromy representation of the connections $\nabla_{\lambda_0,\lambda_1}$ introduced in Theorem \ref{TheoA} factors through a curve if and only if there exists $(p, q) \in \ZZ^2\setminus \lbrace (0,0) \rbrace$ such that $p \lambda_0 + q \lambda_1 = 0$, i.e if and only if $[\lambda_0 : \lambda_1] \in \PP^1_\QQ$. 
\end{theoa}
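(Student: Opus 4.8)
The plan is to analyze when the monodromy representation $\rho_{u,v}$ (with $u=-e^{-i\pi\lambda_0}$, $v=e^{-i\pi\lambda_1}$) admits a factorization through a curve in the sense of Definition \ref{def:factorCurve}. The key structural observation is that $\rho_{u,v}$ is a \emph{dihedral} representation: its image lies in the infinite dihedral group $N$ normalizing the diagonal torus, generated by the diagonal matrices $\mathrm{diag}(t,t^{-1})$ together with the Weyl element $\rho(a)=\begin{pmatrix}0&1\\-1&0\end{pmatrix}$. Projectively, the image in $PSL_2(\CC)$ is thus virtually abelian, and the restriction to the index-two subgroup $\Gamma_0=\ker(\Gamma\to\ZZ/2\ZZ)$ where the Weyl reflection is trivialized lands in the diagonal torus, hence factors through the abelianization. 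This reduces the factorization question to a linear-algebra condition on the character $\chi:\Gamma_0\to\CC^*$ recording the diagonal entries, which is governed by the integer pairs $(p,q)$ relating $u$ and $v$.

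**The first step** is to make precise how a dihedral representation factors through a curve. A projective dihedral representation is by definition pulled back from a representation of a group containing $\ZZ/2\ZZ$ acting on a torus; concretely, I would exhibit the map $f$ as a fibration whose generic fiber realizes the kernel on which the diagonal character $\chi$ is trivial. Following the Corlette--Simpson dichotomy (\cite{CorSim}) together with the analysis in \cite{LTP}, a dihedral representation factors through a curve precisely when the associated abelian (rank one) character $\chi$ on the double cover $\Gamma_0$ is itself pulled back from a one-dimensional base, i.e.\ when the logarithmic one-form $\dd\log$ recording $\chi$ (whose residues are determined by $\lambda_0,\lambda_1$) is a pullback $f^*\omega$ of a form on a curve. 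The existence of such a factorization is equivalent to the rank of the subgroup of $\CC^*$ generated by the local monodromy eigenvalues being small enough that the periods become commensurable.

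**The concrete computation** then proceeds as follows. Using the presentation of $\Gamma$ from Proposition \ref{th:Degtyarev} and the explicit generators, I would compute the local monodromy eigenvalues around the three lines and the conic: around the lines $(y=0)$ and $(x=0)$ these are $u^{\pm 1}=-e^{\mp i\pi\lambda_0}$ and $v^{\pm1}=e^{\mp i\pi\lambda_1}$, while the conic contributes the projectively-order-two element $\rho(a)$. The character $\chi$ on $\Gamma_0$ is determined, up to the diagonal torus, by the exponents $\lambda_0,\lambda_1$, and factoring through a curve forces the corresponding multiplicative periods to satisfy a single $\ZZ$-linear relation. Explicitly, the factorization exists if and only if there is $(p,q)\in\ZZ^2\setminus\{(0,0)\}$ with $u^{p}v^{q}$ of finite order modulo the torus action — which, after taking the logarithmic form and comparing residues at the punctures, translates exactly into $p\lambda_0+q\lambda_1\in\QQ$, and after clearing the rational part into $p\lambda_0+q\lambda_1=0$, i.e.\ $[\lambda_0:\lambda_1]\in\PP^1_\QQ$. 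The $\Leftarrow$ direction is then the easy construction: given such $(p,q)$, the torus character $t\mapsto t^p\otimes t^q$ descends through an explicit quotient map $f:\PP^2-f^{-1}(\delta)\to C$ onto a (possibly orbifold) curve, yielding the required $\tilde\rho$.

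**The main obstacle** I expect is the $\Rightarrow$ direction: ruling out \emph{any} factorization through \emph{any} curve when $[\lambda_0:\lambda_1]$ is irrational. The difficulty is that Definition \ref{def:factorCurve} quantifies over all curves $C$, all divisors $\delta$, and all algebraic maps $f$, so one cannot simply check a single candidate. The right tool is the Corlette--Simpson theorem, which guarantees that if a (suitable, e.g.\ non-rigid or reducible) $SL_2(\CC)$-representation factors through a curve then it factors through an \emph{orbifold} curve whose local data is constrained by the residues of the connection; comparing the residue data forces the logarithmic periods of $\chi$ to be commensurable, contradicting the irrationality of $\lambda_0/\lambda_1$. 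Care is needed because $\rho_{u,v}$ is dihedral rather than reducible, so I would pass to the double cover $\Gamma_0$ to reduce to the abelian (hence reducible) case where the classification is sharpest, then descend the obstruction back to $\Gamma$ using the $\ZZ/2\ZZ$-equivariance of the construction.
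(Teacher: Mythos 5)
Your proposal contains two genuine gaps, one structural and one arithmetic, and both would block the proof as written.

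The structural gap is in the direction you yourself identify as the main obstacle. Your plan is to invoke Corlette--Simpson to reduce ``$\rho_{u,v}$ factors through a curve'' to ``the diagonal character $\chi$ on the index-two subgroup is pulled back from a curve.'' But the Corlette--Simpson factorization alternative concerns Zariski-dense $SL_2(\CC)$-representations, and Theorem \ref{TheoA}(ii) states explicitly that the image of $\rho_{u,v}$ is \emph{not} Zariski dense -- the dihedral case is exactly the case their dichotomy does not govern. So the claimed equivalence is not a citation but is essentially the whole content of Theorem \ref{TheoB}, and it is never established in your proposal; ``comparing the residue data forces the periods to be commensurable'' is a restatement of the desired conclusion, not an argument. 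The paper avoids this entirely by a hands-on geometric argument: assuming a factorization $f$ exists, it composes with the double cover $\pi : \PP^1\times\PP^1 \to \PP^2$, restricts to a generic horizontal line $L$, uses Riemann--Hurwitz to force the target curve to be $\PP^1$, shows that two distinct loops of $L$ mapping to powers of a common infinite-order element would produce a forbidden multiplicative relation among the eigenvalues (so the fibre of $(\phi_L)_*$ over such a loop is a single element), applies Riemann--Hurwitz again to conclude $\phi_L$ is one-to-one, hence that $\phi$ is the first projection up to M\"obius transformations -- and then derives a contradiction by restricting to vertical lines. Nothing of this kind (or any substitute for it) appears in your proposal.

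The arithmetic gap is in your translation of the criterion. You say the factorization exists iff some $u^pv^q$ has finite order, which unwinds to $p\lambda_0+q\lambda_1\in\QQ$, and you then pass ``after clearing the rational part'' to $p\lambda_0+q\lambda_1=0$. These two conditions are not equivalent: for $\lambda_0=\tfrac12$ and $\lambda_1=\sqrt2$ the pair $(p,q)=(1,0)$ gives $p\lambda_0+q\lambda_1=\tfrac12\in\QQ$ (indeed $u=i$ has finite order), yet $[\lambda_0:\lambda_1]\notin\PP^1_\QQ$, so the theorem asserts there is \emph{no} factorization. Your criterion, taken at face value, would therefore prove a false equivalence; no ``clearing'' operation repairs this, because rationality of one $\ZZ$-linear combination is strictly weaker than vanishing of some $\ZZ$-linear combination. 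Finally, your $\Leftarrow$ direction is only asserted (``the torus character descends through an explicit quotient map''), whereas the paper has to work for it: it uses the rational first integral of the associated transversally projective foliation when $\lambda_0/\lambda_1\in\QQ$, the classification of such structures in \cite{LTP}, and a blow-up plus Stein factorization argument to produce the algebraic map through which the structure factors.
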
 

\begin{rema}
Note that our choice of this particular quintic is not arbitrary: work in progress \cite{these} using Degtyarev's aforementioned paper suggests that representations of the complement of most quintics in the projective plane are either degenerate or factor through a curve.
\end{rema}

\subsection{Isomonodromic deformations}

By restriction to generic lines in $\PP^2$, we obtain a family (parametrized by $(\lambda_0, \lambda_1)$) of isomonodromic deformations over the Riemann sphere with five pairwise distinct punctures  $\PP^1_x\setminus \lbrace 0,1,t_1,t_2, \infty \rbrace$, whose monodromy is given in Table \ref{table:monodromy}, where $x$ is a well chosen projective coordinate on $\PP^1$ and $t_1,t_2$ are two independent variables (well defined up to double covering) corresponding to the intersection of the line with the conic $\Cr$. Since this family of connections  is algebraic we get a  two--parameter family of algebraic solutions of the isomonodromy equation associated with such deformations, namely the following Garnier system:
\begin{equation}
\left\lbrace \begin{array}{ll}
\dr_{t_k} p_i  & = - \dr_{q_i} H_k \\ 
\dr_{t_k} q_i  & =  \dr_{p_i} H_k \\ 
\end{array} \right. \; i,j =1,2 \quad ,
\end{equation}
where $(p_i,q_i)_i$ are algebraic functions of $t_1,t_2$ and $H_1, H_2$ are explicit Hamiltonians given in Proposition \ref{prop:Garnier} (see also \cites{Mazz, Diar1}). More precisely if one sets $S_q:=q_1+q_2$, $P_q := q_1q_2$, $S_t := t_1+t_2$ and $P_t:=t_1t_2$ one has the following relations:
\begin{equation}
\left\lbrace \begin{array}{l}
(\lambda_0-1)^2 \lambda_1^2 S_t = -F(S_q,P_q) \\ 
(\lambda_0-1)^2 P_t = - (\lambda_0+\lambda_1-1)^2P_q^2
\end{array} \right. \; ;
\end{equation}
where:
\begin{align*}
F(S_q,P_q) = &(\lambda_0-\lambda_1-1)(\lambda_0+\lambda_1-1)^3 P_q^2\\
& + (\lambda_0-1)^2(\lambda_0+\lambda_1-1)^2(2P_q-2P_qS_q+S_q^2-2Sq)\\
&+(\lambda_0-1)^3(\lambda_0+2\lambda_1-1) \; .
\end{align*}
These solutions generalise the two parameter family known for the Painlevé VI equation (see Subsection \ref{sec:PVI}) and the complex surface associated with the graph of $(t_1,t_2) \mapsto (S_q,P_q)$ is rational.

\begin{table}[!!h]
\begin{center}
\begin{tabular}{c|c|c|c|c}
$x=0$ & $x=1$ & $x=t_1$ & $x=t_2$ & $x=\infty$ \\ 
\hline 
& & & & \\
$\begin{pmatrix}
a_1& 0 \\ 
0 & a_1^{-1}
\end{pmatrix}$ & $\begin{pmatrix}
-a_0 & 0 \\ 
0 &  -a_0^{-1}
\end{pmatrix}$ & $\begin{pmatrix}
0 & 1 \\ 
-1 & 0
\end{pmatrix}$ & $\begin{pmatrix}
0 & a_0^{2}\\ 
-a_0^{-2} & 0
\end{pmatrix}$ & $\begin{pmatrix}
a_0a_1^{-1} & 0 \\ 
0 & a_0^{-1} a_1
\end{pmatrix}$ \\ 
& & & & \\
\end{tabular}
\end{center}  
\caption{Monodromy on a generic line; here $a_j = \exp(- i \pi \lambda_j)$ for $j \in \lbrace 0,1 \rbrace$.}
\label{table:monodromy}
\end{table}

\subsection{Lotka--Volterra foliations} 

One last fact worth noting is that since $\nabla$ is a flat $\mathfrak{sl}_2(\CC)$--connection on a trivial bundle, there exist three meromorphic one--forms $\alpha_0, \alpha_1$ and $\alpha_2$ (given in Theorem \ref{TheoA}) such that
\begin{displaymath}
\nabla = \dd + \Omega \text{ , where } \Omega := \begin{pmatrix}
\alpha_1 & \alpha_0 \\ 
-\alpha_2 & -\alpha_1
\end{pmatrix}  \text{ satisfies } \dd \Omega = \Omega \land \Omega \, .
 \end{displaymath} 
In particular, since $\dd \alpha_2 \land \alpha_2 = 0$ one obtains a family of transversally projective degree two foliations over $\PP^2$ (see \cite{LTP} and Section \ref{sec:LV}; note that this family is therefore integrable in the Casale--Malgrange sense \cite{Casale}) with invariant locus containing the quintic $\Qr$. We show that these are conjugate to a family of Lotka-Volterra foliations over $\CC^3$ \cite{MO, MO_cor}; namely given three complex parameters $(A,B,C)$, the codimension one foliation associated with the one--form over $\CC^3$, with coordinates $(x,y,t)$:
 \begin{displaymath}
\omega_0 := (yV_t-tV_y)\dd x + (tV_x - xV_t)\dd y + (xV_y - yV_x) \dd t \; ,
\end{displaymath} 
where:
\begin{displaymath}
V_x := x(Cy+t), \qquad V_y := y(At+x) \qquad \text{ and } \qquad Vt := t(Bx+y) \; .
\end{displaymath}

\begin{theoa}\label{TheoC}
The foliation defined by the meromorphic one--form $\alpha_2$ is equal to the foliation over $\PP^2$ associated with a Lotka--Volterra system with parameters 
 \begin{displaymath}
 (A,B,C) = \left( \dfrac{\lambda_1}{\lambda_0}, \dfrac{-\lambda_0}{\lambda_0+\lambda_1}, \dfrac{-(\lambda_0+\lambda_1)}{\lambda_1}\right) \quad .
 \end{displaymath}
Conversely, any degree two foliation whose invariant locus contains the quintic $\Qr$ is equal to one of the above form.
 \end{theoa}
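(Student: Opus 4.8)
The plan is to make everything explicit and then reduce the comparison to a finite computation in homogeneous coordinates. First I would read $\alpha_2$ off Theorem \ref{TheoA}: writing $\nabla_{\lambda_0,\lambda_1}=\dd+\Omega$ with $\Omega=-\tfrac{1}{2q}(\lambda_0A_0+\lambda_1A_1+A_2)$, where $q=x^2+y^2+1-2(xy+x+y)$ is the affine equation of $\Cr$, the form $\alpha_2$ is (minus) the lower--left entry of $\Omega$. Since the lower--left entry of $A_2$ vanishes, $\alpha_2=\tfrac{1}{2q}(\lambda_0\beta_0+\lambda_1\beta_1)$, where $\beta_0,\beta_1$ are the explicit lower--left entries of $A_0,A_1$; in particular $\alpha_2$ carries only logarithmic poles along the two coordinate lines $\{x=0\},\{y=0\}$ and a simple pole along $\Cr$. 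I would then homogenise to $[x:y:t]$, multiply by $2q$ and by the monomial clearing the logarithmic poles, and saturate (divide the three coefficients by their gcd) to obtain a polynomial projective one--form $\omega_{\alpha_2}=P\,\dd x+Q\,\dd y+R\,\dd t$ representing the same foliation; a quick check gives $xP+yQ+tR=0$, so $\omega_{\alpha_2}$ is a genuine foliation on $\PP^2$.

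Next, for the direct implication, I would write the Lotka--Volterra form $\omega_0=(yV_t-tV_y)\dd x+(tV_x-xV_t)\dd y+(xV_y-yV_x)\dd t$ for the prescribed triple $(A,B,C)=(\lambda_1/\lambda_0,-\lambda_0/(\lambda_0+\lambda_1),-(\lambda_0+\lambda_1)/\lambda_1)$, and compare it with $\omega_{\alpha_2}$ in the chart in which the three tangent lines of $\Qr$ are the coordinate axes $\{xyt=0\}$. The point to establish is that $\omega_{\alpha_2}$ and $\omega_0$ are proportional by a nonzero rational factor, which proves that the two foliations coincide and simultaneously fixes the dictionary $(A,B,C)(\lambda_0,\lambda_1)$. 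I note that these values satisfy $ABC=1$; this is precisely the resonance under which the Lotka--Volterra system admits an extra invariant conic, which under the identification is $\Cr$. This step is a direct, if somewhat lengthy, verification, the one delicate point being the choice of projective identification aligning the Lotka--Volterra invariant conic with $\Cr$.

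For the converse I would argue structurally. Let $\Fr$ be any degree--two foliation of $\PP^2$ whose invariant locus contains $\Qr$, and represent it by $\omega=P\,\dd x+Q\,\dd y+R\,\dd t$ with $P,Q,R$ homogeneous of degree three, of trivial common factor and satisfying $xP+yQ+tR=0$. Since $\{x=0\}$ is $\Fr$--invariant exactly when $x$ divides $\dd x\wedge\omega$, applying this to the three tangent lines $\{x=0\},\{y=0\},\{t=0\}$ forces $yt\mid P$, $xt\mid Q$ and $xy\mid R$, so that $P=yt\,L_1$, $Q=xt\,L_2$, $R=xy\,L_3$ with $L_1,L_2,L_3$ linear and, by the Euler relation, $L_1+L_2+L_3=0$. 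Equivalently $\omega$ is the Lotka--Volterra form $\omega_0$ attached to a quadratic field $V=(x\ell_1,y\ell_2,t\ell_3)$ for suitable linear $\ell_i$. I would then impose invariance of $\Cr$, written as the divisibility $F_\Cr\mid(\dd F_\Cr\wedge\omega)$ with $F_\Cr=x^2+y^2+t^2-2(xy+xt+yt)$; expanding, this is a system of linear equations in the coefficients of the $L_i$, and solving it cuts the family down precisely to the triples $(A,B,C)$ of the direct part (after the evident rescaling of coordinates fixing $\Cr$), which proves the converse.

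The main obstacle is the conic step on both sides: in the direct implication, producing the explicit identification under which $\omega_{\alpha_2}$ becomes $\omega_0$ and $\Cr$ becomes the Lotka--Volterra invariant conic; and in the converse, solving the $\Cr$--invariance equations and checking that the resulting variety of admissible $(L_1,L_2,L_3)$ is exactly the stated family. By contrast, the reduction to Lotka--Volterra shape via invariance of the three lines is purely formal and presents no difficulty.
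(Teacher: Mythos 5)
Your proposal is correct and follows essentially the same route as the paper's proof: both reduce the statement to comparing explicit degree-two projective one--forms (the paper works with the coefficient $\omega$ of $w^2$ in $\Ric(\nabla)$, which is your $\alpha_2$), settle the direct part by a finite proportionality check, and settle the converse by solving the linear conditions expressing invariance of $\Qr$ (your organisation --- line--invariance forces the Lotka--Volterra shape $ytL_1\,\dd x+xtL_2\,\dd y+xyL_3\,\dd t$ with $L_1+L_2+L_3=0$, then conic--invariance cuts out a two--dimensional linear space --- is just a structured version of the paper's ``direct computation''). The one point you must make explicit is the ``projective identification'' you flag: in the chart where the tangent lines of $\Qr$ are the coordinate axes, $\alpha_2$ and the Lotka--Volterra form for the stated $(A,B,C)$ are \emph{not} proportional on the nose (their affine singular points sit at $(1,0),(0,1)$ versus $(1/B,0),(0,A)$), and the paper handles precisely this by first applying the diagonal affine change sending the first three singular points of $\omega_0^\prime$ to $(0,0),(1,0),(0,1)$, then matching the fourth singular points, and only then imposing $\omega\wedge\omega_0^\prime=0$ to pin down $B=-\lambda_0/(\lambda_0+\lambda_1)$.
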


One can see from Theorem \ref{TheoC} that this family of  foliations is governed by the parameter $\lambda_0/\lambda_1$; there exists a one--parameter family of connections corresponding to any given foliation (see also Subsection 4.4 in \cite{LTP}). We then prove that this gives an example of a family of foliations with algebraic invariant curves of arbitrarily high degree (see also \cite{ALN}).

\subsection*{Acknowledgements} The author would like to thank both Serge Cantat and Frank Loray for their support and guidance in writing this paper. Gaël Cousin, Karamoko Diarra and Valente Ramirez Garcia Luna contributed through several interesting conversations and offered valuable insight on various topics. Special thanks to Thiago Fassarella for the interest he took in this paper and for suggesting several improvements to its exposition. Funding was provided by the Université de Rennes 1, the École normale supérieure de Rennes and the Centre Henri Lebesgue.

Finally, the author thanks the anonymous referee for the time and attention they devoted to reviewing this work.

\section{Proof of Theorem \ref{TheoA}}\label{sec:proofTheoA}

In this section we concern ourselves with setting up a particular fibre bundle over the projective plane $\PP^2$, and then endowing it with a family of logarithmic flat connections satisfying the conditions of Theorem \ref{TheoA}.

\subsection{A rank two fibre bundle}\label{sec:Rk2VB}

Start by considering the complex manifold $X:=\PP^1 \times \PP^1$ and define the following involution: 
\begin{align*}
\tilde{\eta} : X &  \rightarrow X \\
(u_0,u_1) & \mapsto (u_1,u_0) \; .
\end{align*}

The action of $\tilde{\eta}$ gives us a two--fold ramified covering of $\PP^1 \times \PP^1$ over the projective plane $\PP^2$, i.e the fibres of the morphism 
\begin{align*}
 \PP^1 \times \PP^1 & \longrightarrow \PP^2 \\
([u_0^0:u_0^1],[u_1^0:u_1^1]) & \mapsto [s:p:t] := [(u_0^0u_1^1 + u_0^1u_1^0): u_0^0u_1^0:u_0^1 u_1^1 ] \, .
\end{align*}
are the orbits under $\bar{\eta}$. However, for the purpose of this paper, we will compose this mapping with the linear projective transformation of $\PP^2$ given by:
\begin{displaymath}
[s:p:t] \mapsto [p+t-s:p:t] \; .
\end{displaymath}
This means that we will now work in the homogeneous coordinates $x := p+t-s$, $y:=p$ and $t$.
We get a two--fold ramified covering $\pi : X \rightarrow \PP^2$ that ramifies along the diagonal $\Delta :=(u_0=u_1) \subset X$ and sends it onto the conic:
\begin{displaymath}
(\Cr) \qquad x^2+y^2+t^2 = 2(xy +xt+yt) \;.
\end{displaymath}

Now consider the rank two fibre bundle $E$ over the projective plane associated with the locally free sheaf 
\begin{displaymath}
\Er := \Or_{\PP^2} \oplus \Or_{\PP^2} (-1) \, .
\end{displaymath}
Let $e_+$ be some global nonvanishing holomorphic section of $E$ (corresponding to the $\Or_{\PP^2}$ part of the above decomposition) and $e_-$ be some global meromorphic section linearly independent from $e_+$ (and so corresponding to $\Or_{\PP^2} (-1)$) with associated (zeroes and poles) divisor equal to $- L_\infty$, where $L_\infty$ is the line "at infinity" $(t=0)$.

Let us ask ourselves the following question: what does the pullback sheaf $\Fr := \pi^* \Er$ look like ? For any open set $U \subset X$ we have $\Fr(U) = \Er( \pi(U))$, which implies that $\Fr$ is a rank two locally free sheaf inducing a rank two fibre bundle $F \rightarrow X$ with two global section: one nonvanishing holomorphic $e_1 := \pi^* e_+$ and one meromorphic $e_2:=\pi^* e_-$. Since $\pi$ does not ramify over $L_\infty^0 := (u_0=\infty)$ nor $L_\infty^1 := (u_1=\infty)$, $e_2$ has associated divisor  $ - (L_\infty^0+L_\infty^1)$; thus:
\begin{displaymath}
\Fr \cong \Or_{X} \oplus \Or_{X}(-1,-1) \, .
\end{displaymath}

To better understand the bundle $F$, start by considering the rank two trivial bundle $E_0 := \CC^2 \times X  \rightarrow X$ over $X$; it has two independent (constant) holomorphic global sections $f_1\equiv (1,0)$ and $f_2\equiv (0,1)$. Define the following involution: 
\begin{align*}
 E_0 &  \rightarrow E_0 \\
(u_0,u_1,(Z_1,Z_2)_{e_1,e_2}) & \mapsto (u_1,u_0,(Z_1,-Z_2)_{e_1,e_2}) \; .
\end{align*}

First of all, note that its action on the base coincides with that of the involution $\bar{\eta}$. One can then identify two global invariant sections of the bundle $E_0$: 
\begin{itemize}
\item $f_1$, which is holomorphic;
\item $\hat{f_2}:=b \cdot  f_2$, where $b$ is the global meromorphic function $(u_0,u_1)\mapsto	u_0-u_1$. 
\end{itemize}
The local expression $b \cdot  f_2$ defines a global meromorphic section with associated divisor $\Delta - (L_\infty^0+L_\infty^1)$. The $\Or_X$--module spanned by the sections $f_1$ and $\hat{f_2}$ is isomorphic to the rank two locally free module $\Fr$ (by mapping $f_1$ to $e_1$ and $\hat{f_2}$ to $e_2$) and as such defines a rank two vector bundle over $X$ isomorphic to $F$. More precisely, one goes (locally) from $E_0$ to $F$ using the following transformation (which is trivial on the base): 
\begin{displaymath}
\begin{pmatrix}
Z_1 \\ 
Z_2
\end{pmatrix}_{e_1,e_2} \mapsto  \begin{pmatrix}
Z_1\\ 
\frac{Z_2}{u_0-u_1}
\end{pmatrix}_{e_1,\tilde{e_2}} \, .
\end{displaymath}

\subsection{A rank one projective bundle}\label{sec:Rk1PB} By quotient on the fibres ($\PP(\CC^2) = \PP^1_\CC)$, one can associate to both vector bundles $E_0$ and $E$ rank one projective bundles $\PP^1 \times X$ and $\PP(E)$. We can describe the action of $\eta$ on the former as follows: 
\begin{align*}
\eta : \PP^1 \times \PP^1\times \PP^1 &  \rightarrow \PP^1 \times \PP^1\times \PP^1\\
(u_0,u_1,[Z_1:Z_2]) & \mapsto (u_1,u_0,[Z_2:Z_1]) \quad ;
\end{align*}
or, in the affine chart "$z=\frac{Z_1}{Z_2}$", $(u_0,u_1,z) \mapsto (u_1,u_0,1/z)$.

One goes from $\PP^1 \times X$ to $\PP(E)$ through the following invariant rational functions: 
\begin{displaymath}
\left\lbrace \begin{array}{l}
s = u_0+u_1 \\ 
p = u_0u_1 \\
w := (u_0-u_1) \dfrac{z+1}{z-1}
\end{array} \right. \quad ;
\end{displaymath}
here $(s,p)$ gives us local coordinates over the base and $w$ does the same in the fibres. We will use this projective point of view throughout this paper as it allows for easier computations in the long run. It will also allow us to define an interesting family of Lotka--Volterra foliations in Section \ref{sec:LV}.

\subsection{Logarithmic flat connections} \label{sec:setup}

Start by endowing the trivial rank two vector bundle $E_0\rightarrow X$ with the following logarithmic flat connection :
\begin{displaymath}
\nabla_0 := \dd  + \dfrac{1}{2}\begin{pmatrix}
\omega_0 & 0 \\ 
0 & -\omega_0
\end{pmatrix} \; ,
\end{displaymath}
where  $u_0,u_1$ are projective coordinates on the base $X$ and
\begin{displaymath}
\omega_0 :=\lambda_0 \left( \dfrac{\dd u_0}{u_0} - \dfrac{\dd u_1}{u_1} \right) + \lambda_1   \left( \dfrac{\dd u_0}{u_0-1} - \dfrac{\dd u_1}{u_1-1} \right)  \, ,
\end{displaymath}
with $(\lambda_0, \lambda_1) \in \CC^2\setminus \lbrace (0,0) \rbrace$. This connection has singular locus equal to six lines in $X$ (if $\lambda_0\lambda_1(\lambda_0 + \lambda_1) \neq 0$) and naturally gives rise to a Riccati foliation, defined by the following one--form over $\PP(E_0) = X \times \PP^1$:
\begin{displaymath}
\Ric(\nabla_0) := \dd z + \omega_0 z \, \text{ where $z$ is a projective coordinate on the fibres. }
\end{displaymath}

Moreover one easily checks that $\frac{u_0-u_1}{z}\Ric(\nabla_0)$ is an $\eta$--invariant logarithmic one--form over $\PP(F)$, associated with some connection $\nabla_1$ in the following sense: if one has, in some local chart
\begin{displaymath}
(u_0-u_1)\Ric(\nabla_0) = \dd z + \alpha_2 z^2 + 2\alpha_1 z + \alpha_0\; ,
\end{displaymath}
with $\alpha_0, \alpha_1, \alpha_2$ meromorphic one--forms (remark that here $\alpha_2$ and $\alpha_0$ are zero), then one can set (in the same local chart)
\begin{displaymath}
 \nabla_1 := \dd + \begin{pmatrix}
\alpha_1 & \alpha_0 \\ 
-\alpha_2 & -\alpha_1
\end{pmatrix} \; .
\end{displaymath}
Since the associated Riccati form  $\Ric(\nabla_1):=(u_0-u_1)\Ric(\nabla_0)$ is $\eta$--invariant one can use $\nabla_1$ to get a logarithmic connection $\nabla_2$ on $E$ with poles along $(y=0)$, $(x=0)$,  $L_\infty$ and $\Cr$, the latter coming from the fact that $ \pi$ ramifies there. More precisely, in the affine chart described in Subsection \ref{sec:Rk1PB} one has: \small
\begin{align*}
\Ric(\nabla_2) = \dd w & + \dfrac{1}{2(x^2+y^2+1-2(xy+x+y))}\left( g(\lambda_0, \lambda_1,w,x,y)\dfrac{\dd x}{x} + g(\lambda_1, \lambda_0,w,y,x)\dfrac{\dd y}{y} \right) \; ,
\end{align*}\normalsize
where
\begin{align*}
g(\lambda_0, \lambda_1,w,x,y) =& - ( (2\lambda_0 + \lambda_1)x + \lambda_1(y-1) )w^2 + 2(y-x+1)xw\\
& +(2\lambda_0 + \lambda_1)x^3 - ( (4\lambda_0+\lambda_1)(y+1) + 2 \lambda_1 )x^2\\
&- ( (-2\lambda_0 + \lambda_1)y^2 + 2(2\lambda_0+\lambda_1)y - (2\lambda_0+3\lambda_1) )x\\
& + \lambda_1( y^3 - 3y^2+3y -1)\; .
\end{align*}

\subsection{Trivialisations} \label{sec:TrivialB} We wish to turn $\nabla_2$ into a connection on the trivial bundle $\CC^2 \times \PP^2$; this can be done simply by blowing up the pole of any global meromorphic section of $\PP(E)$ then contracting a suitable divisor, however we want to do so without disturbing the logarithmic nature of the connection $\nabla_2$. 

\begin{lemm}There exists a birational mapping $\Phi : \PP(E) \DashedArrow[->,densely dashed    ] \PP^1 \times \PP^2$ conjugating $\Ric(\nabla_2)$ to some Riccati one--form, that is associated with a logarithmic flat connection $\nabla$ over the trivial bundle $\CC^2 \times \PP^2 \rightarrow \PP^2$.
\end{lemm}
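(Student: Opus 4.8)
The plan is to build $\Phi$ \emph{by hand} as an elementary transformation of the ruled variety $\PP(E)$ along the line $L_\infty = (t=0)$, and then to check separately that it preserves logarithmicity and flatness. Such a birational (as opposed to regular) map is forced on us: since $\Er = \Or_{\PP^2} \oplus \Or_{\PP^2}(-1)$ is non-trivial, $\PP(E)$ is not fibrewise isomorphic to $\PP^1 \times \PP^2$, and tensoring by a line bundle leaves $\PP(E)$ unchanged, so one genuinely has to modify $E$ along a divisor. The natural divisor is $L_\infty$, which is precisely the polar locus of the meromorphic section $e_-$ generating the $\Or_{\PP^2}(-1)$-factor. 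Over the affine chart $\CC^2_{x,y} = (t \neq 0)$ both $E$ and the trivial bundle are trivialised by the frame $(e_+, e_-)$ and $\Phi$ will be the identity there; this is what forces $\nabla$ to coincide with $\nabla_2$ on this chart and explains why $\nabla$ is given there by the explicit formula of Theorem \ref{TheoA}.

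Concretely, near $L_\infty$ I would write the elementary transformation as the meromorphic gauge transformation with diagonal matrix $\mathrm{diag}(1,t)$: in a local holomorphic frame of $E$ adapted to $L_\infty$ this raises $\det E$ by $[L_\infty]$, carrying $\Or_{\PP^2}\oplus\Or_{\PP^2}(-1)$ to the trivial bundle and thus $\PP(E)$ to $\PP^1 \times \PP^2$. In the fibre coordinate $w$ of $\Ric(\nabla_2)$ this reads $w \mapsto w/t$, which is the identity on $t = 1$ and a genuine birational scaling along $L_\infty$. I would then transport $\Ric(\nabla_2)$ through $\Phi$ and read off the resulting Riccati one-form $\dd w + \alpha_2 w^2 + 2\alpha_1 w + \alpha_0$, whose coefficients are the entries of the connection matrix $\Omega$ of the announced connection $\nabla := \dd + \Omega$ on $\CC^2 \times \PP^2$.

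Two verifications remain. Flatness is automatic: $\Phi$ restricts to an isomorphism between the complements of the polar loci, so it preserves the monodromy representation; equivalently, a birational bundle transformation conjugates the integrability relation $\dd\Omega = \Omega\wedge\Omega$ into itself, so flatness of $\nabla$ follows from that of $\nabla_2$, inherited in turn from the diagonal (hence flat) $\nabla_0$. Logarithmicity is the real content. The crucial observation is that, because $\pi$ is unramified over $L_\infty$ and $\nabla_0$ is diagonal, the residue of $\nabla_2$ along $L_\infty$ is \emph{diagonal} --- consistently with the diagonal local monodromy at $x = \infty$ recorded in Table \ref{table:monodromy}. Hence the off-diagonal coefficients of the connection matrix vanish along $L_\infty$, and scaling them by $t^{\pm 1}$ under $\mathrm{diag}(1,t)$ produces no pole of order $\geq 2$; the only new contribution, coming from the logarithmic derivative of the gauge, is the simple pole $\pm\,\dd t/t$ along $L_\infty$. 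Thus $\nabla$ remains logarithmic there.

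The main obstacle is controlling this local model at the points where $L_\infty$ meets the other components of $\Qr$: the tangency point $L_\infty \cap \Cr = [1:1:0]$ and the two transverse intersections $L_\infty \cap (x=0)$ and $L_\infty \cap (y=0)$. At $[1:1:0]$ two invariant curves are tangent and the residue of $\nabla_2$ along $\Cr$ is \emph{not} diagonal (it carries the projective order two coming from the ramification), so one must check that multiplying the $\Cr$-polar, off-diagonal entries by $t^{\pm 1}$ neither destroys the simple pole along $\Cr$ nor creates a spurious singular point at $[1:1:0]$ off $\Qr$. I would therefore examine the local normal-crossing model of $\Phi_*\Ric(\nabla_2)$ at each of these three points, confirming that the transformed connection has only simple poles supported exactly on $\Qr$ and no apparent singularities, which is what is needed to conclude that $\nabla$ is a logarithmic flat connection on the trivial bundle with polar locus $\Qr$.
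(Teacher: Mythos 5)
Your overall plan --- trivialise $\PP(E)$ by an elementary transformation along a line of the quintic and check that logarithmicity survives --- is the same kind of argument as the paper's, which performs it along $(y=0)$. But your execution along $L_\infty$ breaks at the key claim that ``the residue of $\nabla_2$ along $L_\infty$ is diagonal'' in the frame adapted to the splitting $\Or_{\PP^2}\oplus\Or_{\PP^2}(-1)$. That frame is obtained by descending the \emph{invariant} sections of $E_0$, and the involution compatible with $\nabla_0$ acts on the fibres by $z\mapsto 1/z$ (Subsections \ref{sec:Rk1PB} and \ref{sec:MonodRep}): it \emph{exchanges} the two eigendirections $z=0$ and $z=\infty$ of the diagonal connection $\nabla_0$, so what descends as the coordinate directions of $w$ are (twists of) the fixed directions $f_1\pm f_2$, i.e.\ $z=\pm 1$, not the eigendirections. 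Consequently the residue of $\nabla_2$ along $L_\infty$ in the $w$--frame has nonzero off--diagonal entries, and the two singular points of the foliation $\Ric(\nabla_2)$ over $L_\infty$ sit at $w=\pm(x-y)$ (the restriction of $\Cr$ to $L_\infty$ being the square $(x-y)^2$), exactly mirroring the points $w=\pm(x-1)$ over $(y=0)$ exhibited in the paper. Your appeal to Table \ref{table:monodromy} does not help: it records conjugacy classes of local monodromy, which says nothing about diagonality in a chosen frame (and Table \ref{table:Res_N3} shows a non--diagonal residue along $L_\infty$ even for the trivialised connection). Note also that if the eigendirections of $\nabla_0$ did descend to the two summands, the pushed--forward monodromy would be reducible, contradicting Theorem \ref{TheoA}(ii).

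The failure is then concrete: writing $\Ric(\nabla_2)=\dd w+\alpha_2w^2+2\alpha_1w+\alpha_0$ in a trivialisation near $L_\infty$, your gauge $\mathrm{diag}(1,t)$ amounts to $w=tw'$, whose effect is $\alpha_0\mapsto\alpha_0/t$ (or $\alpha_2\mapsto\alpha_2/t$ for the other choice of centre); since $w=0$ and $w=\infty$ are \emph{not} singular points of the foliation over $L_\infty$, one has $\mathrm{Res}_{L_\infty}\alpha_0\neq 0$ and $\mathrm{Res}_{L_\infty}\alpha_2\neq 0$, so the transformed connection acquires a pole of order two along $L_\infty$: it is no longer logarithmic. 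The missing idea --- which is precisely the first step of the paper's proof --- is that the centre of the elementary transformation must lie in the singular locus of the Riccati foliation: one must \emph{first} translate one of the two singular sections over the chosen line into the centre ($w\mapsto w-(x-1)$ over $(y=0)$ in the paper, $w\mapsto w\mp(x-y)$ over $L_\infty$ in your setting) and only then divide by the equation of the line. With that correction your route can presumably be carried out, but a second verification then reappears: once the centre is the non--constant section $w=x-y$ rather than a direct summand, triviality of the resulting bundle is no longer the one--line statement ``tensor the $\Or_{\PP^2}(-1)$ factor by $\Or_{\PP^2}(L_\infty)$''; it must be argued separately, as the paper does by observing that its transformation resolves the pole of the meromorphic section $e_-$, or by exhibiting the explicit matrices of Theorem \ref{TheoA}.
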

\begin{proof}
First remark that we have the following local expression along $(y=0)$: 
\begin{displaymath}
\Ric(\nabla_2)_{|_{(y=0)}} = \dd w + f(x)(w+x-1)(w-x+1) \dfrac{\dd y}{y} \; .
\end{displaymath}
This tells us that the codimension one foliation associated with the one--form $\Ric(\nabla_2)$ has two singular points on each fibre above $(y=0)$, namely at $w = \pm (x-1)$. So in order to get a birationnal map $\PP(E) \DashedArrow[->,densely dashed    ] \PP^1 \times \PP^2$ one can proceed as follows: 
\begin{itemize}
\item move one of the aforementioned singular loci (e.g $(w=x-1)\cap(y=0)$) at $(w=y=0)$;
\item blow up $(y=0)\cap(w=\infty)$ then contract the strict transform of the fibre at $(y=0)$ on $(w=y=0)$. This latest step is achieved (in our usual affine chart) through the birational map $(x,y,w) \mapsto (x,y,w/y)$.
\end{itemize}
Explicitly in our local chart, the mapping $\Phi$ is given by $$\Phi(w,x,y) = (y(w-x+1),x,y)\; .$$This means that we are blowing up (inside the total space) a line in each fibre over $(y=0)$ then contracting the strict transforms of said fibres thus resolving the singularities of the global meromorphic section $e_-$ described in the proof of Theorem \ref{TheoA}; this shows that our mapping does indeed end in a trivial bundle and since we took care of contracting divisors only on points of the singular locus of the foliation associated with $\Ric(\nabla_2)$ we get a logarithmic flat connection over $\CC^2 \times \PP^2 \rightarrow \PP^2$.
\end{proof}

In the end, one gets a connection $\nabla=\nabla_{\lambda_0, \lambda_1}$ on the trivial bundle $\CC^2 \times \PP^2$ that almost satisfies condition (i) in Theorem \ref{TheoA}, the only thing left to check being whether or not it is a $\mathfrak{sl}_2(\CC)$--connection. Explicitly, the Riccati form associated with $\nabla$ is given by (in the affine chart $(t=1)$):
\begin{align*}
\Ric(\nabla) = \dd w - \dfrac{1}{2(x^2+y^2+1-2(xy+x+y))}\left( f_1(w,x,y)\dfrac{y\dd x}{x} - f_2(w,x,y)\dfrac{x\dd y}{y} \right) \;
\end{align*}
where 
\begin{align*}
 f_1(x,y) = & ( (2 \lambda_0 + \lambda_1)x + \lambda_1(y-1) )y w^2\\
 & + 2( (2\lambda_0+\lambda_1-1)x^2 + ( (\lambda_1+1)y - (2\lambda_0+2\lambda_1-1))x - \lambda_1(y-1) )w\\
& + 2( 2\lambda_0 + \lambda_1 -1)x^2 + ( (-2\lambda_0+\lambda_1+2)y + 2(2\lambda_0-3) )x\\
& + \lambda_1(-y^2+3y-2)
\end{align*} 
and 
\begin{align*}
f_2(x,y) = &  ( \lambda_0(x-1) +(\lambda_0+2\lambda_1)y )xw^2\\
& + 2 ( (\lambda_0-1)(x^2+1)+ ( (\lambda_0+2\lambda_1+1)y - 2 (\lambda_0-1) )x - (\lambda_0+2\lambda_1-1)y) w\\
& + 2(\lambda_0-1)(x^2-1) + (\lambda_0+4\lambda_1+2)yx - (\lambda_0+2\lambda_1)y^2 + (3\lambda_0+4\lambda_1-2 )y \; .
\end{align*}
Note that our birational transformation has "broken" the symmetry between the two components $f_1$ and $f_2$.

We can explicitly compute the residues of the connection $\nabla = \nabla_{\lambda_0, \lambda_1}$ and so check that it is indeed a $\mathfrak{sl}_2(\CC)$--connection (see Table \ref{table:Res_N3}); note that the eigenvalues at $(y=0)$ have been slightly modified because we moved the singular points of the associated foliation.

\begin{table}[!!h]

\begin{center}
\begin{tabular}{c|c|c}
Divisor & Residue & Eigenvalues \\ 
\hline 
& & \\
$y=0$ & $\left(\begin{array}{cc}
-\frac{1}{2} \, \lambda_{0} + \frac{1}{2} & \frac{2 \, {\left(\lambda_{0} - 1\right)}}{y - x + 1} \\
0 & \frac{1}{2} \, \lambda_{0} - \frac{1}{2}
\end{array}\right)
$ & $\pm \dfrac{\lambda_0-1}{2}$\\ 
& & \\
\hline 
& &\\
$x=0$ & $\left(\begin{array}{cc}
-\frac{\lambda_{1} {\left(y - x + 1\right)}}{2 \, {\left(y - x - 1\right)}} & \frac{2 \, \lambda_{1}}{y - x - 1} \\
-\frac{\lambda_{1} {\left(y - x\right)} }{2 \, {\left(y - x - 1\right)}} & \frac{\lambda_{1} {\left(y - x + 1\right)}}{2 \, {\left(y - x - 1\right)}}
\end{array}\right)$& $\pm \dfrac{\lambda_1}{2}$\\ 
& &\\
\hline 
& &\\
$\Cr$ & $\left(\begin{array}{cc}
\frac{{\left(2 \, \lambda_{0} + 2 \, \lambda_{1} - 1\right)} {\left(y - x + 1\right)} - 4 \, \lambda_{0} + 2}{4 \, {\left(y - x - 1\right)}} & -\frac{2 \, {\left({\left(\lambda_{0} + \lambda_{1} - 1\right)} {\left(y - x + 1\right)} - 2 \, \lambda_{0} + 2\right)}}{(y-x+1)(y-x-1)} \\
\frac{{\left(\lambda_{0} + \lambda_{1}\right)} {(y-x+1)(y-x-1)}}{8 \, {\left(y - x - 1\right)}} & -\frac{{\left(2 \, \lambda_{0} + 2 \, \lambda_{1} - 1\right)} {\left(y - x + 1\right)} - 4 \, \lambda_{0} + 2}{4 \, {\left(y - x - 1\right)}}
\end{array}\right)$ &$\pm \dfrac{1}{4}$ \\ 
& &\\
\hline 
& &\\
$L_\infty$  &  $ \left(\begin{array}{cc}
-\frac{1}{2} \, \lambda_{0} - \frac{1}{2} \, \lambda_{1} & 0 \\
\frac{\lambda_{0} + \lambda_{1}}{2 \, {\left(X - 1\right)}} & \frac{1}{2} \, \lambda_{0} + \frac{1}{2} \, \lambda_{1}
\end{array}\right)
$ &$\pm \dfrac{\lambda_0+\lambda_1}{2}$\\ 
($X=x/y$) & &
\end{tabular} 
\end{center}
\caption{Residues for $\nabla$.}
\label{table:Res_N3}
\end{table}

\subsection{Monodromy representation}\label{sec:MonodRep}

To conclude the proof of Theorem \ref{TheoA} one needs to compute the monodromy representation of the connection $\nabla$ and see that it is, as announced, a dihedral representation of $\Gamma$ into $SL_2(\CC)$. First, let us prove a result announced in Subsection \ref{sec:topology}.

\renewcommand{\thepropB}{\ref{prop:repres}}
\begin{propB}
The only (up to conjugacy) family of non--degenerate representations of $\Gamma$ into $SL_2(\CC)$ is as follows:
$$\rho_{u,v} : a  \mapsto \begin{pmatrix}
0 & 1 \\ 
-1 & 0
\end{pmatrix}  , \quad 
b  \mapsto \begin{pmatrix}
u & 0 \\ 
0 & u^{-1}
\end{pmatrix}, \quad
c  \mapsto \begin{pmatrix}
v & 0 \\ 
0 & v^{-1}
\end{pmatrix} \; , \text{ for }  u,v \in \CC^* \;.$$
\end{propB}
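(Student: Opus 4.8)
The plan is to set $A := \rho(a)$, $B := \rho(b)$, $C := \rho(c)$ and translate the three defining relations of $\Gamma$ into matrix identities in $SL_2(\CC)$. The relation $[b,c]=1$ says simply that $B$ and $C$ commute. For the two quadratic relations I would first record the elementary identity $(BA)^2 = A^{-1}(AB)^2A$, which shows that $(ab)^2=(ba)^2$ holds if and only if $A$ commutes with $(AB)^2$, and likewise that $(ac)^2=(ca)^2$ holds if and only if $A$ commutes with $(AC)^2$. Thus a representation is a homomorphism on $\Gamma$ exactly when $B,C$ commute and $A$ centralizes both $(AB)^2$ and $(AC)^2$. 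The non-degeneracy hypotheses to be used are the four projective non-triviality conditions recalled above, namely $A,B,C,ABAC \neq \pm I_2$, together with irreducibility.

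Next I would analyze the commuting pair $(B,C)$. Since non-degeneracy rules out $B=\pm I_2$ and $C=\pm I_2$, two commuting non-scalar elements of $SL_2(\CC)$ fall into exactly two cases: either they are simultaneously diagonalizable (each then automatically has distinct eigenvalues, so in a common eigenbasis $B=\mathrm{diag}(u,u^{-1})$, $C=\mathrm{diag}(v,v^{-1})$ with $u^2,v^2\neq 1$), or they share a single common eigenvector and are both $\pm$-unipotent. In either case irreducibility of $\{A,B,C\}$ forbids a common invariant line: in the diagonal case this forces the off-diagonal entries of $A=\begin{pmatrix}p&q\\ r&s\end{pmatrix}$ to satisfy $q\neq 0$ and $r\neq 0$, and in the parabolic case it forces the lower-left entry $r\neq 0$.

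In the diagonalizable case I would write out $[A,(AB)^2]=0$ explicitly; using $q,r\neq 0$ and $u^2\neq 1$, the vanishing of the off-diagonal entries of the commutator collapses to the single equation $s=-pu^2$, and the analogous computation with $C$ gives $s=-pv^2$. Hence either $p=0$ (so $s=0$, $A$ is anti-diagonal with $qr=-1$, and a diagonal conjugation normalizes $A$ to $\left(\begin{smallmatrix}0&1\\-1&0\end{smallmatrix}\right)$ while keeping $B,C$ diagonal, yielding precisely $\rho_{u,v}$), or $p\neq 0$ and $u^2=v^2$. In the latter sub-case $C=\pm B$ projectively, and substituting $s=-pu^2$ into the explicit formula for $(AB)^2$ shows it is scalar, hence $(AB)^2=\pm I_2$; then $ABAC=\pm(AB)^2=\pm I_2$, contradicting $\rho(abac)\neq \pm I_2$. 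This is exactly the point where the fourth non-triviality condition is needed, and I expect this degenerate locus $u^2=v^2$ to be the main obstacle, since it is invisible to the irreducibility and the $a,b,c$ conditions alone.

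Finally, for the parabolic case I would run the same commutator computation with $B$ a (sign times) unipotent: there condition coming from the lower-left entry, using $r\neq 0$, forces $\mathrm{tr}(AB)=0$, and symmetrically $\mathrm{tr}(AC)=0$. Comparing these two trace equations pins $B$ and $C$ to the same unipotent part (so $C=\pm B$ projectively) and gives $(AB)^2=-I_2$, whence once more $ABAC=\pm I_2$, again excluded by $\rho(abac)\neq\pm I_2$. This eliminates the parabolic case entirely, so that up to conjugacy every non-degenerate representation is one of the $\rho_{u,v}$, completing the proof. The two genuinely delicate points, and the only places where the argument could go wrong, are the clean reduction of the quadratic relations to centralizer conditions and the systematic use of $\rho(abac)\neq \pm I_2$ to kill the two degenerate sub-loci (the shared-unipotent configuration and $u^2=v^2$).
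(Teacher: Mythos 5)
Your proof is correct and takes essentially the same route as the paper's: normalize the commuting pair $(B,C)$, split into the simultaneously-diagonalizable and parabolic cases (the paper's Case 1, $\mu^2\neq 1$, and Case 2, $\mu^2=1$), and use $\rho(abac)\neq\pm I_2$ in exactly the same way to exclude the degenerate locus $B=\pm C$. The only real difference is one of execution: where you evaluate the centralizer condition $[A,(AB)^2]=0$ by explicit computation to extract the trace conditions $s=-pu^2$ and $\mathrm{tr}(AB)=\mathrm{tr}(AC)=0$, the paper first argues abstractly that $(AB)^2$ is central in the non-abelian group generated by $A$ and $B$, hence scalar, and then equal to $-I_2$ --- which, by Cayley--Hamilton in $SL_2(\CC)$, is equivalent to your trace-zero conditions.
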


\begin{proof}
Let $\rho$ be such a representation; since $\mathrm{Im}(\rho)$ must be non--abelian, either $C:=\rho(c)$ or $B:=\rho(b)$ does not commute to $A:=\rho(a)$, say $B$. Then $(AB)^2 = (BA)^2$ and so $(AB)^2$ commutes to the non--abelian subgroup spanned by $A$ and $B$ in $PSL_2(\CC)$, therefore $(AB)^2$ must be equal to $\varepsilon I_2$ for some $\varepsilon \in \lbrace -1,1 \rbrace$. This means that $AB$ is diagonisable with eigenvalues in either $\lbrace -1 , 1 \rbrace$ (if $\varepsilon = 1$) or  $\lbrace -i , i \rbrace$. In the former case, $AB$ would be equal to $\pm I_2$ and so one would have $AB = BA$. Therefore, $(AB)^2$ must be equal to $-I_2$.

Up to conjugacy, one can assume that $A$, $B$ and $C$ are of the form
\begin{displaymath}
A = \begin{pmatrix}
\alpha & \beta \\ 
-1 & \gamma
\end{pmatrix} \quad , \quad B= \begin{pmatrix}
\mu & \kappa \\ 
0 & \mu^{-1}
\end{pmatrix} \quad \text{and} \quad C= \begin{pmatrix}
\tau & \chi \\ 
0 & \tau^{-1}
\end{pmatrix} \; .
\end{displaymath}
Since $(AB)^2 = - I_2$, $AB = - B^{-1}A^{-1}$ and so one must have 
\begin{equation} 
\left \lbrace \begin{array}{ccc}
\alpha \gamma + \beta &=&\det(A)=1 \\
\alpha \mu &=& \gamma \mu^{-1} + \kappa
\end{array} \right. \; .
\end{equation}
We assumed the monodromy representation to be non--degenerate; this means in particular that $ABAC=-B^{-1}A^{-1}AC = -B^{-1}C$ must not be equal to $\pm I_2$, i.e $B \neq \pm C$ and so $\mu^2 \neq \tau^2$.

\textbf{Case 1: $\mu^2 \neq 1$.} In this case, $B$ is diagonalisable so it is possible (up to conjugacy) to assume $\kappa = 0$. Since $B$ commutes to $C$, it follows that $\chi$ must also be zero and $\tau^2 \neq  1$. This implies that $A$ does not commute to $C$ and so one gets
\begin{equation} 
\left \lbrace \begin{array}{ccc} 
\alpha \gamma + \beta &=&1 \\
\alpha \mu^2 &=& \gamma \\
\alpha \tau^2 &=& \gamma \\
\end{array} \right. \; .
\end{equation}
As $\tau^2 \neq \mu^2$, this forces $\alpha$ and $\gamma$ to be zero, thus $\beta$ must be one.

\textbf{Case 2: $\mu^2=1$.} Since $B$ is not projectively trivial, then $\kappa$ must be nonzero. The fact that $B$ must commute to $C$ forces $\tau^2$ to be one and so one must also have $\chi \neq 0$. It is therefore impossible for $A$ to commute to $C$ and so by a similar reasoning to the one above, $(AC)^2 = - I_2$, thus one gets
\begin{equation} 
\left \lbrace \begin{array}{ccc}
\alpha \gamma + \beta &=&\det(A)=1 \\
\alpha \mu &=& \gamma \mu^{-1} + \kappa\\
\alpha \tau &=& \gamma \tau^{-1} + \chi
\end{array} \right. \; ,
\end{equation}
which is equivalent to (since $\mu^2 = \tau^2 = 1$)
\begin{equation} 
\left \lbrace \begin{array}{ccc}
\alpha \gamma + \beta &=&\det(A)=1 \\
\alpha  &=& \gamma  + \kappa\mu\\
\alpha  &=& \gamma  + \chi\tau
\end{array} \right. \; ,
\end{equation}
therefore $\kappa\mu=\chi\tau$. This means that $B = \pm C$ and so $ABAC = \pm I_2$, which contradict the non--degeneracy condition.
\end{proof}

\begin{rema}
This implies that any non--degenerate representation of $\Gamma$ will have a "sizeable" kernel; indeed recall that we have a natural two--fold ramified covering $\pi : \PP^1 \times \PP^1 \xrightarrow[]{2:1} \PP^2$ ramifying over the diagonal $\Delta$. This mapping yields a nonramified covering $\tilde{\pi} : X-D \xrightarrow[]{2:1} \PP^2-\Qr$ and thus one gets that $\pi_1(X-D)$ embeds into $\pi_1(\PP^2 - \Qr) \cong \Gamma$ as an index two subgroup. If one denotes by $\PP^1_n$ the $n$ punctured sphere, the projection on the line $(y=0)$ gives a fibration $X-D \rightarrow \PP^1_3$ with fibre $\PP^1_4$. As the universal covering of $\PP^1_3$ (namely the hyperbolic plane $\HH^2$) is contractible, the homotopy exact sequence associated with this fibration yields:
\begin{displaymath}
0 = \pi_2(\PP^1_3) \rightarrow \pi_1(\PP^1_4) \rightarrow \pi_1(X-D) \rightarrow \pi_1(\PP^1_3) \rightarrow 0\; .
\end{displaymath}
In particular, there is an injective morphism from $\pi_1(\PP^1_4) \cong \Fb_3$, where $\Fb_r$ denotes the free group over $r$ generators, into $\pi_1(X-D)$; which in turn implies that the group $\Gamma$ contains a noncommutative free group.

Moreover the orbifold fundamental group $\Gamma_\pi^{\mathrm{orb}}$ associated with the ramified covering $\pi$ also contains a free group; more precisely if we define 
\begin{displaymath}
\Gamma_\pi^{\mathrm{orb}} := \langle a,b,c \, | \, (ab)^2 (ba)^{-2}= (ac)^2 (ca)^{-2}= [b,c]= a^2=1 \rangle \;  
\end{displaymath}
then $\pi$ induces an embedding of the fundamental group of $X$ minus six lines into $\Gamma_\pi^{\mathrm{orb}}$, i.e $\Fb_2  \times \Fb_2 \hookrightarrow \Gamma_\pi^{\mathrm{orb}}$. This is especially relevant since the projective representations $\Gamma \rightarrow PSL_2(\CC)$ associated with the monodromy of the connections we will describe in this paper factor through this orbifold fundamental group.
\end{rema}

Now we will describe the local monodromy around irreducible components of the polar locus. Let $C$ be an irreducible curve contained in the polar locus of some logarithmic flat $\mathfrak{sl}_2(\CC)$--connection $\nabla$ over $\PP^2$, with associated monodromy representation $\varrho$. Set a point $p \in C$ such that no other irreducible curve in the polar locus of the connection passes through $p$; then if $U$ is a sufficiently small analytic neighbourhood of $p$ one gets:
\begin{displaymath}
\pi_1(U \setminus C \cap U) \cong \ZZ \;.
\end{displaymath}
Let $\gamma$ be any loop generating the above cyclic group; the conjugacy class of the matrix $\varrho(\gamma)$ does not depend on the choice of a base point for the fundamental group. Indeed, if $\gamma$ is chosen as above for some base point $q$ and if $q^\prime$ is some other point in the complement of the polar locus, then if one takes $\delta$ to be any path between $q^\prime$ and $q$, the loop $\delta \cdot \gamma \cdot \delta^{-1}$ is an element of the fundamental group of the complement based at $q^\prime$ whose monodromy is conjugate to $\varrho (\gamma)$.

\begin{defi}\label{def:locMon}
Using the notations above, define the local monodromy of $\nabla$ around $C$ as the conjugacy class of the matrix $\varrho (\gamma)$.
\end{defi}

For $j=0,1$ set $a_j := e^{-i\pi \lambda_j}$; the monodromy associated with the connection $\nabla_0$ is as follows:
\begin{itemize}
\item around $u_0=j$ (resp. $u_1=j$), $j=0,1$, it is the multiplication by $a_j$ (resp. $a_j^{-1}$); 
\item around $u_0=\infty$ (resp. $u_1=\infty$), it is the multiplication by $a_0^{-1}a_1^{-1}$ (resp. $a_0a_1$).
\end{itemize}
This is a complete description since the fundamental group of the projective line minus six lines is isomorphic to $\Fb_2\times \Fb_2$ and is generated by loops going around $x,y = 0,1$ once.

The monodromy of the connection $\nabla_2$ comes directly from that of $\nabla_0$ around the three lines in its singular locus; more precisely we can explicitly compute (up to conjugacy) its local monodromy around:
\begin{itemize}
\item $(y=0)$: 
\begin{displaymath}
 \begin{pmatrix}
a_0 & 0 \\ 
0 & a_0^{-1}
\end{pmatrix}  \; ;
\end{displaymath} 
\item $(x=0)$: 
\begin{displaymath}
 \begin{pmatrix}
a_1 & 0 \\ 
0 & a_1^{-1}
\end{pmatrix}  \; ;
\end{displaymath} 
\item and $L_\infty$: 
\begin{displaymath}
 \begin{pmatrix}
(a_0a_1)^{-1} & 0 \\ 
0 & a_0a_1
\end{pmatrix}  \; .
\end{displaymath} 
\end{itemize}

However the monodromy of $\nabla_2$ around the conic $\Cr$ comes solely from the ramification of the covering $\pi$. More precisely since any path linking $(u_0,u_1) \in X$ to $(u_1,u_0)$ pushes back as a loop on the quotient $\PP^2 = \pi(X)$ and since any local solution $z$ of $\nabla_0$ satisfies $z(u_1,u_0) = \dfrac{1}{z(u_0,u_1)}$ the monodromy group of $\nabla_2$ must contain the following matrix:
\begin{displaymath}
\begin{pmatrix}
0 & 1 \\ 
1 & 0
\end{pmatrix} \; .
\end{displaymath}

\begin{prop}\label{prop:globMonod}
The monodromy group of the connection $\nabla$ is the subgroup of the infinite dihedral group
\begin{displaymath}
\mathbf{D}_\infty := \left\lbrace \left. \begin{pmatrix}
0 & \alpha \\ 
-\alpha^{-1} & 0
\end{pmatrix}\, , \: \begin{pmatrix}
 \beta & 0 \\ 
0& \beta^{-1}
\end{pmatrix} \, \right| \, \alpha,\beta \in \CC^* \right\rbrace \leq SL_2(\CC)\, 
\end{displaymath}
generated by the following three matrices:
\begin{displaymath}
\begin{pmatrix}
0 & 1 \\ 
-1 & 0
\end{pmatrix}, \quad \begin{pmatrix}
-e^{-i \pi \lambda_0} & 0 \\ 
0 & -e^{i \pi \lambda_0}
\end{pmatrix} \quad \text{ and }  \begin{pmatrix}
e^{-i \pi \lambda_1} & 0 \\ 
0 & e^{i \pi \lambda_1}
\end{pmatrix} \; .
\end{displaymath}
\end{prop}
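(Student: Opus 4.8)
The plan is to exploit the fact that, by Degtyarev's presentation (Proposition~\ref{th:Degtyarev}), the group $\Gamma$ is generated by the three loops $a,b,c$ encircling the conic $\Cr$ and the lines $(y=0)$, $(x=0)$. Consequently the monodromy group of $\nabla$ is generated by the three local monodromy matrices $\rho(a),\rho(b),\rho(c)$, and it suffices to compute these three matrices in one common global frame and to check that they coincide with the three matrices listed in the statement (all of which visibly lie in $\mathbf{D}_\infty$).

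First I would handle the two \emph{line} generators $b$ and $c$. Since the birational gauge transformation $\Phi$ is a biregular isomorphism over $\PP^2-\Qr$, the connections $\nabla$ and $\nabla_2$ define the same flat bundle there, so the local monodromies of $\nabla$ around $(x=0)$ and $(y=0)$ may be read off from $\nabla_2$, hence from the diagonal connection $\nabla_0$ through the covering $\pi$. In the frame descending from the constant sections $f_1,f_2$ the connection $\nabla_0$ is diagonal, so both $\rho(b)$ and $\rho(c)$ are diagonal; recalling that $x=(u_0-1)(u_1-1)$ and $y=u_0u_1$, the loop around $(x=0)$ (resp. $(y=0)$) pulls back to a loop around $u_0=1$ (resp. $u_0=0$), giving monodromy $\mathrm{diag}(a_1,a_1^{-1})$ (resp. $\mathrm{diag}(a_0,a_0^{-1})$) with $a_j=e^{-i\pi\lambda_j}$. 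The only correction is the effect of $\Phi$, which modifies the bundle \emph{only} along $(y=0)$: this elementary transformation, together with the $\mathfrak{sl}_2(\CC)$ normalisation, shifts the residue eigenvalues there from $\pm\lambda_0/2$ to $\pm(\lambda_0-1)/2$ (as recorded in Table~\ref{table:Res_N3}) and hence, via the rule (monodromy eigenvalue) $=e^{-2\pi i(\text{residue eigenvalue})}$, multiplies $\rho(b)$ by $-I_2$. Thus $\rho(b)=\mathrm{diag}(-a_0,-a_0^{-1})$ while $\rho(c)=\mathrm{diag}(a_1,a_1^{-1})$ is unchanged, and these are exactly the second and third listed generators.

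Next I would treat the \emph{conic} generator $a$. The loop around $\Cr$ represents the nontrivial class of $\Gamma/\tilde{\pi}_*\pi_1(X-D)\cong\ZZ/2$ and therefore does not lift to a closed loop: through the ramified covering $\pi$ it lifts to a path joining $(u_0,u_1)$ to $(u_1,u_0)$. Since $\eta$ acts on the fibre coordinate by $z\mapsto 1/z$ and any local solution of $\nabla_0$ satisfies $z(u_1,u_0)=1/z(u_0,u_1)$, the corresponding monodromy is the projective involution $z\mapsto 1/z$; its $SL_2(\CC)$ lift compatible with the residue eigenvalues $\pm\tfrac14$ at $\Cr$ (which give monodromy eigenvalues $\pm i$) is $\begin{pmatrix}0&1\\-1&0\end{pmatrix}$, the first listed generator.

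Finally I would assemble the pieces: the three generators all belong to $\mathbf{D}_\infty$, and $\mathbf{D}_\infty$ is a group (a product of two diagonal or of two antidiagonal elements is diagonal, and a diagonal times an antidiagonal element is antidiagonal), so the whole monodromy group is contained in $\mathbf{D}_\infty$ and equals the subgroup generated by these three matrices; in particular it is conjugate to $\rho_{u,v}$ with $u=-a_0$ and $v=a_1$, confirming part (ii) of Theorem~\ref{TheoA}. The delicate points, and where I expect the real work to lie, are twofold: (i) checking that all three local monodromies can be written in a \emph{single} global frame — which is what makes the statement about a concrete set of generators meaningful, and which works precisely because $\nabla_0$ is globally diagonal and the sheet exchange is globally $z\mapsto 1/z$; and (ii) pinning down the $SL_2(\CC)$ signs, especially the $-I_2$ twist at $(y=0)$ produced by the elementary transformation $\Phi$. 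Reconciling these signs against the residue data of Table~\ref{table:Res_N3} is the crux of the argument.
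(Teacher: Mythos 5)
Your proof is correct and follows essentially the same route as the paper's: Degtyarev's generators $a,b,c$, diagonal monodromies for $b,c$ read off from the diagonal connection $\nabla_0$ through the covering $\pi$, and the anti--diagonal involution for $a$ coming from the deck transformation $z \mapsto 1/z$. You are in fact more explicit than the paper's own proof on the one delicate point — the $-I_2$ twist of $\rho(b)$ produced by the elementary transformation $\Phi$ along $(y=0)$, which the paper leaves implicit in the residue data of Table \ref{table:Res_N3} — so your write-up fills in, rather than deviates from, the published argument.
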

\begin{proof}
We know from Proposition \ref{th:Degtyarev} that the fundamental group of the complement of the singular locus of $\nabla$ in $\PP^2$ has the following presentation:
\begin{displaymath}
\Gamma = \langle a,b,c \, | \, (ab)^2 (ba)^{-2}= (ac)^2 (ca)^{-2}= [b,c]=1 \rangle \; ;
\end{displaymath}
and that we can take $a$ to be a loop whose lift is some path in $X$ joining $(x,y)$ and $(y,x)$ (for generic $(x,y) \in X$) and $b$ (resp. $c$) to be a loop going around $(y=0)$ (resp. $(x=0)$) once (see Fig. \ref{fig:Pi_1_gen}). If we choose a set of local coordinates in which the monodromy matrices of both $b$ and $c$ are diagonal (this is possible because the two loops commute) then the monodromy of $a$ only comes from the covering $\pi$ and is equal to:   $$\begin{pmatrix}
0 & 1 \\ 
-1 & 0
\end{pmatrix} \, . $$
In conclusion, the monodromy representation is given by the following matrices:
\begin{displaymath}
\begin{pmatrix}
0 & 1 \\ 
-1 & 0
\end{pmatrix}, \quad \begin{pmatrix}
-e^{-i \pi \lambda_0} & 0 \\ 
0 & -e^{i \pi \lambda_0}
\end{pmatrix} \quad \text{ and }  \begin{pmatrix}
e^{-i \pi \lambda_1} & 0 \\ 
0 & e^{i \pi \lambda_1}
\end{pmatrix} \; ,
\end{displaymath}
which are elements of $\mathbf{D}_\infty$. 
\end{proof}

\section{Algebraic Garnier solutions}

In this section we show that the connection $\nabla$ induces an isomonodromic deformation over the four and five punctured spheres. Furthermore we give rational parametrisations of the associated  algebraic Painlevé VI and Garnier solutions and a description of the associated monodromy representation.

\subsection{Painlevé VI solutions}\label{sec:PVI} 

It is well known \cite{Jimbo,Hitchin} that isomonodromic deformations of rank two $\mathfrak{sl}_2(\CC)$--connections over the four punctured sphere correspond to solutions of the sixth Painlevé equation, namely the following order two nonlinear differential equation:
\begin{align*}
\dfrac{d^2q}{du^2} = &\dfrac{1}{2} \left( \dfrac{1}{q} + \dfrac{1}{q-1} + \dfrac{1}{q-u}\right) \left( \dfrac{\dd q}{du} \right)^2  \\
& - \left( \dfrac{1}{u} + \dfrac{1}{u-1} + \dfrac{1}{q-u} \right)\dfrac{\dd q}{du}\\
& + \dfrac{q(q-1)(q-u)}{u^2(u-1)^2} \left( \alpha + \beta \dfrac{u}{q^2} + \gamma \dfrac{u-1}{(q-1)^2}+ \delta \dfrac{u(u-1)}{(q-u)^2}\right)\, ,
\end{align*}
where $\alpha, \beta,\gamma$ and $\delta$ are complex--valued parameters.

\begin{figure}
\begin{center}
\begin{pspicture}(0,-3.85)(11.3,3.85)
\definecolor{color3}{rgb}{0.00392156862745098,0.00392156862745098,0.00392156862745098}
\definecolor{color18}{rgb}{0.0196078431372549,0.611764705882353,0.00392156862745098}
\pscircle[linewidth=0.04,linecolor=red,dimen=outer](6.37,-0.28){1.79}
\psline[linewidth=0.04cm,linecolor=color3](0.0,-3.83)(7.5,3.23)
\psline[linewidth=0.04cm,linecolor=color3](4.16,3.83)(10.96,-1.71)
\psline[linewidth=0.04cm,linecolor=color3](0.04,-3.47)(11.0,-1.11)
\psline[linewidth=0.04cm,linecolor=color18](11.12,-1.33)(1.94,-0.25)
\psline[linewidth=0.04cm,linecolor=color18](11.28,-1.53)(1.84,1.39)
\psline[linewidth=0.04cm,linecolor=color18](11.0,-1.23)(1.12,-1.39)
\psline[linewidth=0.04cm,linecolor=color18](11.04,-1.63)(4.34,2.31)
\usefont{T1}{ptm}{m}{n}
\rput(4.6,-1.9){\color{red}$\Cr$}
\end{pspicture} 
\end{center}

\caption{Special lines.}\label{fig:specialLines}
\end{figure}
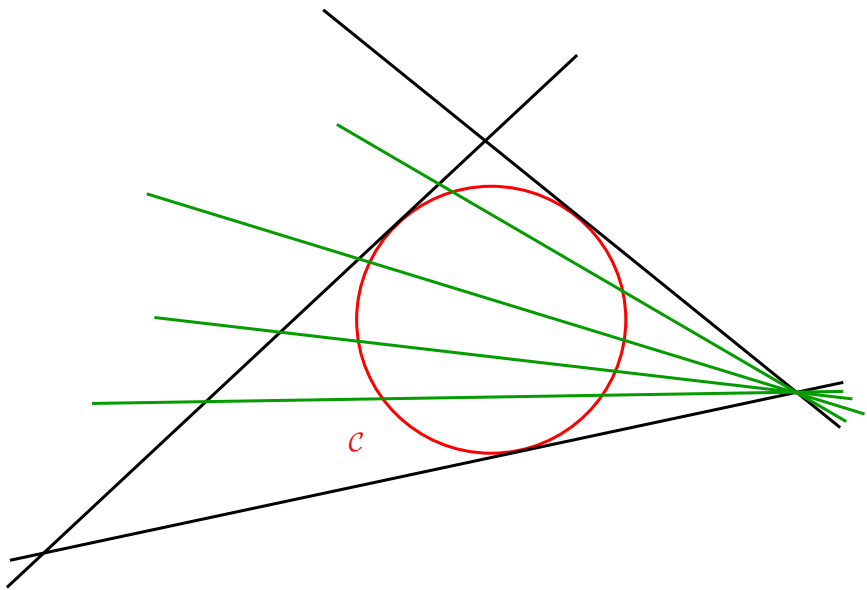

Let us look at the connection induced by $\nabla$ on the family of lines going through $P_0 := (x=0)\cap L_\infty$ (see Fig. \ref{fig:specialLines}) that are neither $(x=0)$ nor the line at infinity; these are the lines of the form $(y=c)$ in the affine chart $(x,y)$ from Subsection \ref{sec:Rk2VB}. According to Subsection \ref{sec:TrivialB}, this corresponds to studying the isomonodromic  deformation given by the following Riccati forms, for generic $y$: 
\begin{displaymath}
\Ric(\nabla_y) := \dd w - \dfrac{y}{2x(x^2+y^2+1-2(xy+x+y))} f_y(x,w)\dd x \: ,
\end{displaymath}
where 
\begin{align*}
 f_y(x,w) = & ( \lambda_0(x-1) +(\lambda_0+2\lambda_1)y )yw^2\\
& + 2 ( (\lambda_0-1)(x^2+1)+ ( (\lambda_0+2\lambda_1+1)y - 2 (\lambda_0-1) )x - (\lambda_0+2\lambda_1-1)y) w\\
& + 2(\lambda_0-1)(x^2-1) + (\lambda_0+4\lambda_1+2)yx - (\lambda_0+2\lambda_1)y^2 + (3\lambda_0+4\lambda_1-2 )y \; .
\end{align*} 
From this isomonodromic deformation we produce algebraic solutions of the Painlevé VI equation by adapting part of a paper by Hitchin \cite{Hitchin}.

\begin{prop}
The family of algebraic solutions of the Painlevé VI equation associated with the connections $(\nabla_{\lambda_0,  \lambda_1})_{\lambda_0,  \lambda_1}$ is given by the functions
\begin{displaymath}
q(u) = -\dfrac{\lambda_1}{2\lambda_0 + \lambda_1} \sqrt{u}  
\end{displaymath} 
and the parameters:
$$
\alpha = \dfrac{(2\lambda_0+\lambda_1)^2}{2} \; ,\beta  = - \dfrac{\lambda_1^2}{2} \; , \gamma = 1/8 \text{ and }\delta = 3/8  \; .
$$
\end{prop}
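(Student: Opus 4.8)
The plan is to exploit the global flatness of $\nabla$. Since $\nabla$ is a flat logarithmic connection on $\PP^2$, its restriction to the pencil of lines $(y=c)$, $c\in\CC$, is by construction an isomonodromic deformation of the four--punctured sphere: the monodromy of each restricted connection is the image in the (fixed) global dihedral representation of Proposition \ref{prop:globMonod} of $\pi_1$ of the line minus its punctures, and this does not depend on $c$. By the Jimbo--Miwa--Malgrange theory \cite{Jimbo,Hitchin}, the position $q$ of the apparent singularity of such a deformation, viewed as a function of the modulus $u$ of the four marked points, then automatically satisfies the Painlevé VI equation, with parameters prescribed by the local exponents. Consequently I do not need to verify the nonlinear ODE by hand: it suffices to (a) compute $q$ as an explicit function of $u$ and (b) read off the four exponents. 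Substituting the resulting $q(u)$ back into Painlevé VI can serve as an independent cross--check.

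First I would locate the four singular points of $\Ric(\nabla_y)$ on the line $(y=c)$ and identify the modulus. The line meets $\Qr$ at $x=0$ (the component $(x=0)$), at the two points $x_\pm=(\sqrt c\pm1)^2$ cut out on the conic $\Cr$ by $x^2-2(c+1)x+(c-1)^2=0$, and at the base point $[1:0:0]=(y=0)\cap L_\infty$, which lies at $x=\infty$. Normalising by the Möbius transformation $x\mapsto x/x_-$, which fixes $0$ and $\infty$ and sends $x_-$ to $1$, places the fourth point at
\begin{displaymath}
u=\frac{x_+}{x_-}=\left(\frac{\sqrt c+1}{\sqrt c-1}\right)^2,\qquad\text{so that}\qquad \sqrt u=\frac{\sqrt c+1}{\sqrt c-1}\;.
\end{displaymath}
This square root — arising because the two conic intersection points are conjugate over $\CC(c)$ — is exactly the source of the $\sqrt u$ in the answer. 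The local exponents (differences of the two residue eigenvalues) are read off from Table \ref{table:Res_N3}: they are $\lambda_1$ at $x=0$, equal to $2\cdot\tfrac14=\tfrac12$ at each of the two conic points, and, at the base point, the combined residue (sum of the $(y=0)$ and $L_\infty$ contributions, of exponents $\lambda_0$ and $\lambda_0+\lambda_1$) gives $\theta_\infty=2\lambda_0+\lambda_1$. Feeding $(\theta_0,\theta_1,\theta_u,\theta_\infty)=(\lambda_1,\tfrac12,\tfrac12,2\lambda_0+\lambda_1)$ into the standard dictionary (in the conventions of \cite{Jimbo}) $\alpha=\tfrac12\theta_\infty^2$, $\beta=-\tfrac12\theta_0^2$, $\gamma=\tfrac12\theta_1^2$, $\delta=\tfrac12(1-\theta_u^2)$ yields precisely $\alpha=\tfrac{(2\lambda_0+\lambda_1)^2}{2}$, $\beta=-\tfrac{\lambda_1^2}{2}$, $\gamma=\tfrac18$, $\delta=\tfrac38$.

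It then remains to compute $q$. I would rewrite $\Ric(\nabla_y)$ as the matrix connection $\dd+A_y(x)\,\dd x$ with $A_y=\begin{pmatrix}\alpha_1&\alpha_0\\-\alpha_2&-\alpha_1\end{pmatrix}$, where $\alpha_0,\alpha_1,\alpha_2$ are the coefficients of $1,w,w^2$ in $f_y$ up to the common scalar factor $-\tfrac{y}{2x(x^2+y^2+1-2(xy+x+y))}$. In the Jimbo--Miwa normalisation the apparent singularity is the zero of the upper off--diagonal entry, once the residue at $x=\infty$ has been diagonalised; I would therefore conjugate $A_y$ by the constant matrix $P$ diagonalising that residue, locate the zero of $(PA_yP^{-1})_{12}$ in the normalised coordinate $x/x_-$, and substitute $\sqrt c=\tfrac{\sqrt u+1}{\sqrt u-1}$ to obtain $q(u)=-\tfrac{\lambda_1}{2\lambda_0+\lambda_1}\sqrt u$.

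The main obstacle is precisely this last computation. The constant gauge diagonalising the residue at the base point mixes the three forms $\alpha_0,\alpha_1,\alpha_2$, so the relevant zero is genuinely more involved than the zero of $\alpha_0$ alone; moreover one must keep a consistent branch of $\sqrt c$ (equivalently of $\sqrt u$) throughout the normalisation and substitution. Once $q(u)$ has been put in closed form, the Painlevé VI property is guaranteed by isomonodromy and the exponent computation above fixes the parameters, which completes the proof.
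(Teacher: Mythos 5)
Your skeleton is exactly the paper's: restrict $\nabla$ to the pencil $y=c$, rationalise the two conic points via a square root of $c$ (the paper's $z^2=y$), normalise the four poles by a M\"obius map so that the modulus is a perfect square, invoke isomonodromy for the Painlev\'e VI property, read the parameters off the local exponents, and obtain $q$ as the zero of an off-diagonal entry of the Fuchsian matrix $H=\frac{W_0}{x}+\frac{W_1}{x-1}+\frac{W_2}{x-t}$ once the residue at infinity is diagonal. The paper's Table \ref{table:Res_Np} is precisely this gauge, and your final formulas for $q(u)$ and $(\alpha,\beta,\gamma,\delta)$ agree with the proposition (your normalisation sends the conic points to $1$ and $u=x_+/x_-$ instead of $u=x_-/x_+$, which is immaterial here since the exponents at $1$ and $u$ coincide).

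However, your bookkeeping at $x=\infty$ contains two compensating errors, so the parameter derivation as written does not stand on its own. First, Table \ref{table:Res_N3} gives eigenvalues $\pm\frac{\lambda_0-1}{2}$ along $(y=0)$, not $\pm\frac{\lambda_0}{2}$ (the trivialisation of Subsection \ref{sec:TrivialB} shifted them), so the exponent difference at the base point is $2\lambda_0+\lambda_1-1$ --- exactly what the paper's Table \ref{table:Res_Np} lists at $x=\infty$ --- and not $2\lambda_0+\lambda_1$. Second, the dictionary you attribute to \cite{Jimbo} is not his: there $\alpha=\frac12(\theta_\infty-1)^2$, not $\frac12\theta_\infty^2$. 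Your two unit shifts cancel, which is the only reason you land on the stated $\alpha$. Done correctly, one must moreover fix a sign: the zeros of the $(1,2)$ and of the $(2,1)$ entries both solve Painlev\'e VI, but for opposite signs of $\theta_\infty$, yielding $\alpha=\frac12(2\lambda_0+\lambda_1)^2$ for one choice and $\alpha=\frac12(2\lambda_0+\lambda_1-2)^2$ for the other. Your prescription (``diagonalise the residue at infinity and take the zero of the upper off-diagonal entry'') leaves the eigenvalue ordering, hence which of these two values you obtain, undetermined, whereas the paper pins this down by taking the lower-left entry of $H$ in the gauge of Table \ref{table:Res_Np}. So: right approach, right final answer, but the $x=\infty$ computation should be redone with the correct exponent $2\lambda_0+\lambda_1-1$, Jimbo's actual normalisation, and an explicit choice of gauge and entry.
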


\begin{proof}
Let $z$ be a parameter such that $z^2 = y$; then $\nabla_y$ has poles at $x = (z\pm 1)^2$, $x=0$ and $x = \infty$. Up to Möbius transformation, one can assume that these are in fact located at $s \in \lbrace 0,1,u(z),\infty \rbrace $, with: 
\begin{displaymath}
u(z) = \dfrac{z^2-2z+1}{z^2+2z+1} = \dfrac{(z-1)^2}{(z+1)^2} \, .
\end{displaymath}
It is then possible to compute the relevant data associated with this family of connections (see Table \ref{table:Res_Np}).

\begin{table}
\begin{center}
\begin{tabular}{c|c|c}
Pole & Residue & Eigenvalues \\ 
\hline 
& & \\
$x=0$ & $W_0:=\left(\begin{array}{cc}
-\frac{\lambda_{1} (z^{2} + 1)}{2 \, {\left(z^{2} - 1\right)}} & \frac{2 \, \lambda_{1}}{z^{2} - 1} \\
-\frac{\lambda_{1} z^{2}}{2 \, {\left(z^{2} - 1\right)}} & \frac{\lambda_{1}( z^{2} + 1)}{2 \, {\left(z^{2} - 1\right)}}
\end{array}\right)
$ &$\pm \dfrac{\lambda_1}{2}$ \\ 
& &  \\
\hline 
& &\\
$x=1$ &$W_1:= \left(\begin{array}{cc}
\frac{{\left(2 \, \lambda_{0} + 2 \, \lambda_{1} - 1\right)} z + 2 \, \lambda_{0} - 1}{4 \, {\left(z + 1\right)}} & \frac{{\left(\lambda_{0} + \lambda_{1} - 1\right)} z + \lambda_{0} - 1}{z^{2} + z} \\
-\frac{{\left(\lambda_{0} + \lambda_{1}\right)} z^{2} + \lambda_{0} z}{4 \, {\left(z + 1\right)}} & -\frac{{\left(2 \, \lambda_{0} + 2 \, \lambda_{1} - 1\right)} z + 2 \, \lambda_{0} - 1}{4 \, {\left(z + 1\right)}}
\end{array}\right)
$  & $\pm \dfrac{1}{4} $\\ 
& &\\
\hline 
& &\\
$x=t(z)$ & $ W_2:=\left(\begin{array}{cc}
\frac{{\left(2 \, \lambda_{0} + 2 \, \lambda_{1} - 1\right)} z - 2 \, \lambda_{0} + 1}{4 \, {\left(z - 1\right)}} & -\frac{{\left(\lambda_{0} + \lambda_{1} - 1\right)} z - \lambda_{0} + 1}{z^{2} - z} \\
\frac{{\left(\lambda_{0} + \lambda_{1}\right)} z^{2} - \lambda_{0} z}{4 \, {\left(z - 1\right)}} & -\frac{{\left(2 \, \lambda_{0} + 2 \, \lambda_{1} - 1\right)} z - 2 \, \lambda_{0} + 1}{4 \, {\left(z - 1\right)}}
\end{array}\right)
$ &$\pm \dfrac{1}{4}$ \\ 
& &\\
\hline 
& &\\
$x=\infty$  & $ W:=\left(\begin{array}{cc}
-\lambda_{0} - \frac{1}{2} \, \lambda_{1} + \frac{1}{2} & 0 \\
0 & \lambda_{0} + \frac{1}{2} \, \lambda_{1} - \frac{1}{2}
\end{array}\right)
$  &$\pm \dfrac{2\lambda_0 + \lambda_1-1}{2}$\\ 
 & &
\end{tabular} 
\end{center}
\caption{Residues for $\nabla_{y}$.}
\label{table:Res_Np}
\end{table}

Let us now set $$H := \dfrac{W_0}{x} + \dfrac{W_1}{x-1}+\dfrac{W_2}{x-t}\;,$$ where the $W_i$ are the residues from Table \ref{table:Res_Np}; then since $-W$ is diagonal and equal to the sum $W_0+W_1+W_2$, its lower left coefficient is a degree one polynomial in $x$, whose root can be explicitly computed as a rational function of $z$: 
\begin{displaymath}
q(z) := -\dfrac{\lambda_1}{2\lambda_0 + \lambda_1} \dfrac{z-1}{z+1} \; ,
\end{displaymath}
or as an algebraic function of $u$: 
\begin{displaymath}
q(u) = -\dfrac{\lambda_1}{2\lambda_0 + \lambda_1} \sqrt{u}  \, .
\end{displaymath}
One can then check that this function $u\mapsto q(u)$ is indeed a solution of the sixth Painlevé equation for the announced choice of parameters. 

\end{proof}

\subsection{Restriction to generic lines}\label{sec:ResGenL} 

Let us now consider the connection induced by $\nabla$ on generic lines in $\PP^2$, such a line being given in our usual affine chart by an equation of the form $y=\alpha x+\beta$. We thus obtain an isomonodromic deformation $(\nabla_{\alpha,\beta})_{\alpha,\beta}$ over the five punctured sphere; more precisely if one chooses a parameter $z$ such that $z^2 = \beta(1-\alpha)+\alpha$ then one gets (after Möbius transformation) a family of logarithmic flat connections over $\PP^1 \setminus \lbrace 0,1,t_1,t_2,\infty \rbrace$, where:
\begin{displaymath}
t_1 = -\dfrac{\alpha(z+1)^2}{(\alpha-1)(\alpha-{z}^2)} \quad \text{ and } \quad t_2 = -\dfrac{\alpha(z-1)^2}{(\alpha-1)(\alpha-{z}^2)} \; .
\end{displaymath}

The associated Riccati forms are given by:
\begin{displaymath}
\Ric(\nabla_{\alpha,\beta}) = \dd w +  \dfrac{a_2(x)w^2+ a_1(x)w + a_0(x) }{2x(x-1)(x-t_1)(x-t_2)}\dd x \; 
\end{displaymath}
where: 
\begin{align*}
\dfrac{a_2(x)}{\alpha(x-1)({z}^2-\alpha)} =& (\lambda_0+\lambda_1)( \alpha^2 -(z^2+1)\alpha + z^2 )x^2 \\
&+(-\lambda_1\alpha^2+( \lambda_0(z^2+1)+2\lambda_1)\alpha - (2\lambda_0 +\lambda_1)z^2 )x\\
&+\lambda_1(z^2-1)\alpha \\
\end{align*}
\begin{align*}
\dfrac{a_1(x)}{2} =& (\lambda_0+\lambda_1)(\alpha^4-2(z^2+1)\alpha^3+( z^4+4z^2+1)\alpha^2-2(z^4+z^2)\alpha+z^4)x^3\\
& + [ -(2\lambda_0+3\lambda_1-1)\alpha^4 \\
& \qquad +( (4\lambda_0 + 4\lambda_1-1)z^2+4\lambda_0+6\lambda_1-1 )\alpha^3 \\
& \qquad- ((2\lambda_0+\lambda_1)z^4 +2(4\lambda_0+\lambda_1-1)z^2 + (2\lambda_0+3\lambda_1))\alpha^2 \\
& \qquad+  ((4\lambda_0+2\lambda_1-1)z^4+(4\lambda_0+4\lambda_1-1)z^2)\alpha \\
& \qquad -(2\lambda_0+\lambda_1+1)z^4 ]x^2\\
& +[ 2\lambda_1 a^4-((2\lambda_0-1)z^2+(2\lambda_0+6\lambda_1-1))\alpha^3 \\
& \qquad +((\lambda_0-\lambda_1)z^4+2(3\lambda_0+2\lambda_1-1)z^2 + \lambda_0+3\lambda_1)\alpha^2\\
& \qquad +((2\lambda_0-1)z^4+(2\lambda_0+2\lambda_1-1)z^2)\alpha ]x \\
& +\lambda_1( 2(1-z^2)\alpha+z^4-1 )\alpha^2
\end{align*}
and
\begin{align*}
\dfrac{a_0(x)}{4\alpha(\alpha-1)} =& (\lambda_0+\lambda_1-1) (1-\alpha)(z^2-\alpha) x^2\\
&+( ( (\lambda_0-1)(\alpha-2)-\lambda_1)z^2 -\lambda_1\alpha^2+(\lambda_0+2\lambda_1-1)\alpha)x \\
&+\lambda_1 \alpha (z^2-1) \; .
\end{align*}
Using the explicit formulas given in Subsection \ref{sec:TrivialB}, we can explicitly compute the spectral data associated with these connections (see Table \ref{table:Res_Nab}). To mirror what we did in Subsection \ref{sec:PVI}, let us assume (up to a change of basis) that the residue at infinity $M$ is diagonal and set:
\begin{displaymath}
\hat{H}:=\dfrac{M_0}{x} + \dfrac{M_1}{x-1} + \dfrac{M_{t_1}}{x-t_1}+ \dfrac{M_{t_2}}{x-t_2} \;;
\end{displaymath}
then since $M$ does not depend on $x$, the lower left coefficient of $\hat{H}$ must be a degree two polynomial in $x$, say:
\begin{equation} \label{coeff}
\hat{H}_{2,1}=\dfrac{c(t_1,t_2)(x^2-S_q(t_1,t_2)x+P_q(t_1,t_2))}{x(x-1)(x-t_1)(x-t_2)} \, ,
\end{equation}
where $S_q := q_1+q_2$ and $P_q := q_1 q_2$, with $q_1,q_2$ some algebraic functions of $(t_1,t_2)$.

\begin{table}
\begin{center}
\begin{tabular}{c|c|c}
Pole & Residue & Eigenvalues \\ 
\hline 
& & \\
$x=0$ & $ M_0:= \left(\begin{array}{rr}
-\frac{\lambda_{1} (z^2-2\alpha+1) }{2 \, {\left({z}^{2} - 1\right)}} & -\frac{2\lambda_{1}  {\left(\alpha - 1\right)} }{{z}^{2} - 1} \\
-\frac{\lambda_{1} ({z}^{2} - \alpha) }{2 \, {\left({z}^{2} - 1\right)}} & \frac{\lambda_{1} (z^2-2\alpha+1)} {2 \, {\left({z}^{2} - 1\right)}}
\end{array}\right)
$ & $ \pm\dfrac{\lambda_1}{2} $ \\ 
& &  \\
\hline 
& &\\
$x=1$ &$M_1:= \left(\begin{array}{cc}
-\frac{1}{2} \, \lambda_{0} + \frac{1}{2} & -\frac{2\alpha(\alpha-1)(1-\lambda_0)}{\alpha^{2} - {z}^{2}} \\
0 & \frac{1}{2} \, \lambda_{0} - \frac{1}{2}
\end{array}\right)
$  & $ \pm\dfrac{1}{2}(\lambda_0-1) $\\ 
& &\\
\hline 
& &\\
$x=t_1$ & $ M_{t_1} :=  \left(\begin{array}{cc}
\frac{(2\lambda_0-1)({z}+1) + 2 \lambda_1(\alpha+{z})}{4 \, {\left({z} + 1\right)}} & -\frac{(\alpha-1)(\lambda_0+\lambda_1\alpha - 1 + (\lambda_0+\lambda_1-1)z)}{{\left(\alpha + 1\right)} {z} + {z}^{2} + \alpha} \\
\frac{\lambda_0({z}^2+(1+\alpha){z}+\alpha) + \lambda_1({z}+\alpha)^2}{4 \, {\left({\left(\alpha - 1\right)} {z} + \alpha - 1\right)}} & -\frac{(2\lambda_0-1)({z}+1) + 2 \lambda_1(\alpha+{z})}{4 \, {\left({z} + 1\right)}}
\end{array}\right)
$ &$\pm \dfrac{1}{4}$ \\ 
& &\\
\hline 
& &\\
$x=t_2$ & $  M_{t_2} := \left(\begin{array}{cc}
-\frac{(2\lambda_0-1)(1-{z}) + 2 \lambda_1(\alpha-t)}{4 \, {\left({z} - 1\right)}} & \frac{(\alpha-1)(\lambda_0+\alpha\lambda_1 - 1 - (\lambda_0+\lambda_1-1)z)}{{\left(\alpha + 1\right)} {z} - {z}^{2} - \alpha} \\
-\frac{\lambda_0({z}^2+(1+\alpha){z}+\alpha) + \lambda_1({z}+\alpha)^2}{4 \, {\left({\left(\alpha - 1\right)} {z} - \alpha + 1\right)}} & \frac{(2\lambda_0-1)(1-{z}) + 2 \lambda_1(\alpha-{z})}{4 \, {\left({z} - 1\right)}}
\end{array}\right)
 $ &$\pm \dfrac{1}{4}$ \\ 
& &\\
\hline 
& &\\
$x=\infty$  & $M:= \left(\begin{array}{rr}
-\frac{1}{2} \, \lambda_{0} - \frac{1}{2} \, \lambda_{1} & 0 \\
- \frac{(\lambda_{0} +  \lambda_{1})\alpha}{2 \, {\left(\alpha - 1\right)}} & \frac{1}{2} \, \lambda_{0} + \frac{1}{2} \, \lambda_{1}
\end{array}\right)$  &$ \pm \dfrac{1}{2}( \lambda_0+\lambda_1 )$\\ 
 & &
\end{tabular} 
\end{center}
\caption{Residues for $\nabla_{\alpha,\beta}$.}
\label{table:Res_Nab}
\end{table}

\subsection{Rational parametrisations}

First remark that one can rewrite (\ref{coeff}) as follows:
$$
x(x-1)(x^2 - S_t x + P_t) \hat{H}_{2,1}= c(t_1,t_2) (x^2 - S_q x + P_q) \, ,$$
where $S_t = t_1+t_2$ and $P_t = t_1 t_2$ are the elementary symmetric polynomials in $(t_1,t_2)$. 

\begin{lemm}\label{lem:RatPar}
The parameters $(\alpha,{z})$ introduced in Subsection \ref{sec:ResGenL} give a rational mapping $(\PP^1)^2 \DashedArrow[->,densely dashed    ](\PP^1)^4$ giving explicit expressions of $(t_1,t_2,S_q,P_q)$, namely: 
\begin{align*}
t_1 & = -\dfrac{\alpha({z}+1)^2}{(\alpha-1)(\alpha-{z}^2)} \, ,\\
t_2 & = -\dfrac{\alpha({z}-1)^2}{(\alpha-1)(\alpha-{z}^2)} \, ,\\
S_q  & = \dfrac{\lambda_0(\alpha^2-2\alpha+z^2)-\lambda_1(1+z^2+2\alpha)\alpha+\alpha(2-\alpha)-z^2}{(\lambda_0+\lambda_1-1)(\alpha-{z}^2)(\alpha-1)} \, ,\\
P_q  & = \dfrac{(\lambda_0-1)({z}-1)({z}+1)\alpha}{(\lambda_0+\lambda_1-1)(\alpha-{z}^2)(\alpha-1)} \; .
\end{align*}
\end{lemm}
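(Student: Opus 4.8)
The plan is to obtain the four formulas one at a time, starting from the two that are already half-established. The expressions for $t_1$ and $t_2$ have in fact already been written down at the beginning of Subsection \ref{sec:ResGenL}: they are precisely the locations of the two poles of $\nabla_{\alpha,\beta}$ coming from the intersection of the line $y=\alpha x+\beta$ with the conic $\Cr$, after the normalising M\"obius transformation sending the five singular points to $\lbrace 0,1,t_1,t_2,\infty\rbrace$ and using $z^2=\beta(1-\alpha)+\alpha$. So for these two I would simply record that they are the images of the conic intersection points under the chosen normalisation, exactly as stated when the connection $\nabla_{\alpha,\beta}$ was introduced; no new computation is needed beyond identifying $t_1,t_2$ with the two roots already displayed.

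The substance of the lemma is the pair of formulas for $S_q$ and $P_q$. Here I would exploit the characterisation of $q_1,q_2$ given just above the statement: having put the residue $M$ at infinity in diagonal form, the quantities $q_1,q_2$ are defined as the two zeros of the off--diagonal (lower--left) entry $\hat H_{2,1}$ of $\hat H=\frac{M_0}{x}+\frac{M_1}{x-1}+\frac{M_{t_1}}{x-t_1}+\frac{M_{t_2}}{x-t_2}$. Concretely, I would take the lower--left entries of the four residue matrices $M_0,M_1,M_{t_1},M_{t_2}$ recorded in Table \ref{table:Res_Nab} (the entry of $M_1$ being zero) and form the rational function $\hat H_{2,1}$; clearing the denominator $x(x-1)(x-t_1)(x-t_2)$ produces a numerator that, because the residue at infinity is diagonal, must drop in degree to a quadratic $c(t_1,t_2)\,(x^2-S_q x+P_q)$ as recorded in equation (\ref{coeff}). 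Reading off the ratio of the constant term to the leading coefficient gives $P_q$, and the ratio of the linear to the leading coefficient gives $S_q$, each as a rational function of the numerator coefficients.

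The remaining task is then purely algebraic: re--express these coefficients, which come out initially in terms of $t_1,t_2$ and the entries of the $M_\bullet$, back in terms of $(\alpha,z)$ using the already--established substitutions for $t_1,t_2$ and $z^2=\beta(1-\alpha)+\alpha$, and simplify to match the two displayed closed forms. I expect the main obstacle to be exactly this simplification: the raw expression for $S_q$ involves summing four rational functions with distinct denominators and then cancelling a substantial common factor, and verifying that everything collapses to the stated quotient with denominator $(\lambda_0+\lambda_1-1)(\alpha-z^2)(\alpha-1)$ is a delicate bookkeeping exercise rather than a conceptual one. A useful sanity check that I would carry out is to confirm that the common factor $c(t_1,t_2)$ is nonzero generically and that the leading coefficient of the numerator does not vanish, which is what guarantees $\hat H_{2,1}$ is genuinely a degree--two polynomial over the common denominator and hence that $S_q,P_q$ are well defined; this is ultimately a consequence of the residue at infinity being diagonal with the eigenvalues listed in Table \ref{table:Res_Nab}.
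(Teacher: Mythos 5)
Your proposal is correct, but it takes a genuinely different route from the paper's own proof. You compute $S_q$ and $P_q$ directly from their definition: once the residue at infinity is diagonal, the numerator of $\hat{H}_{2,1}$ over the common denominator $x(x-1)(x-t_1)(x-t_2)$ loses its cubic term, and its three remaining coefficients are rational functions of $(\alpha,z)$ because every input (the residue entries and the poles $t_1,t_2$) already is; reading off ratios of coefficients then gives the stated formulas, so rationality in $(\alpha,z)$ is automatic and the only work is the simplification you anticipate. The paper instead eliminates $x$ from equation (\ref{coeff}) using Gr\"obner bases to obtain the implicit polynomial system relating $(S_t,P_t)$ to $(S_q,P_q)$ --- precisely the algebraic Garnier solution announced in the introduction --- and then shows that the discriminant of that system vanishes along two pairs of lines, so that the double cover given by $z$ (via $z^2=\alpha x^\prime$) rationalizes the branches and yields the parametrisation. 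Your route is more elementary and avoids any branch-selection issue, since the definition of $(S_q,P_q)$ via $\hat{H}_{2,1}$ pins down the functions uniquely; the paper's route buys, as a by-product, the implicit relations between $(S_t,P_t)$ and $(S_q,P_q)$ quoted elsewhere in the paper, together with a structural explanation of why $(\alpha,z)$ are the right rationalizing coordinates.

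One slip to correct in execution: you cannot take the lower-left entries literally from Table \ref{table:Res_Nab}, because in the basis of that table the residue $M$ at infinity is only lower-triangular, not diagonal (its lower-left entry is $-\frac{(\lambda_0+\lambda_1)\alpha}{2(\alpha-1)}$). With the raw table entries the sum of the four lower-left residues equals $-M_{2,1}\neq 0$, so the numerator of $\hat{H}_{2,1}$ stays cubic and the degree drop you rely on fails. You must first conjugate all five residues by the explicit (unipotent lower-triangular) matrix diagonalizing $M$ --- as the opening sentence of your second paragraph implicitly assumes --- and only then extract the entries; after that conjugation your argument goes through as described.
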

\begin{proof}
Using Gröbner bases to eliminate the variable $x$ one obtains a system of equations of the following form: 
\begin{equation}
\left\lbrace \begin{array}{l}
(\lambda_0-1)^2 \lambda_1^2 S_t = -F(S_q,P_q) \\ 
(\lambda_0-1)^2 P_t = - (\lambda_0+\lambda_1-1)^2P_q^2
\end{array} \right. \; ;
\end{equation}
where:
\begin{align*}
F(S_q,P_q) = &(\lambda_0-\lambda_1-1)(\lambda_0+\lambda_1-1)^3 P_q^2\\
& + (\lambda_0-1)^2(\lambda_0+\lambda_1-1)^2(2P_q-2P_qS_q+S_q^2-2Sq)\\
&+(\lambda_0-1)^3(\lambda_0+2\lambda_1-1) \; .
\end{align*}
The discriminant $\Delta_t$ of this system vanishes along $2$ pairs of parallel lines in $\PP^1_{S_q} \times \PP^1_{P_q}$; namely:
\begin{displaymath}
(\Delta_t = 0) = (\alpha=0) \cup (x^\prime=0) \cup (\alpha = \infty) \cup (x^\prime=\infty) \subset	\PP^1_\alpha \times \PP^1_{x^\prime} 
\end{displaymath}
for some projective coordinate $x^\prime$ such that ${z}^2 = \alpha x^\prime$. This explicit description of the two--fold ramified covering given by ${z}$ allows us to parametrize $(S_q,P_q)$ as rational functions of $(\alpha,z)$, hence concluding the proof.
\end{proof}

We can now prove that we have indeed constructed a family of algebraic solutions for a Garnier system. More precisely, consider the following Hamiltonian system:
\begin{equation} \label{Garnier2}
\left\lbrace \begin{array}{ccc}
\dr_{t_k} \mathbf{p}_i & = - \dr_{\mathbf{q}_i} H_k  & i,k = 1,2\\ 
\dr_{t_k} \mathbf{q}_i &  = \dr_{\mathbf{p}_i} H_k & i,k = 1,2
\end{array} \right. \, ,
\end{equation}
where:
\begin{align*}
H_k :=  (-1)^{k}\dfrac{2 H(t_k,t_{3-k},\mathbf{p}_1,\mathbf{p}_2,\mathbf{q}_1,\mathbf{q}_2) + H(t_k,t_{3-k},\mathbf{p}_2,\mathbf{p}_1,\mathbf{q}_2,\mathbf{q}_1) }{2(\mathbf{q}_1-\mathbf{q}_2)(t_1-t_2)(t_k-1)t_k}
\end{align*}
with:\begin{align*}
\dfrac{H(t_1,t_2,\mathbf{p}_1,\mathbf{p}_2, \mathbf{q}_1,\mathbf{q}_2)}{\mathbf{p}_1\mathbf{q}_1(\mathbf{q}_2-t_1)} \; = \; &  \mathbf{p}_1 \mathbf{q}_1^3 + ( (t_1+t_2+1)\mathbf{p}_1 + (\lambda_0+\lambda_1-1) ) \mathbf{q}_1^2\\
& - (  (t_1+t_2+t_1t_2)\mathbf{p}_1 - (2\lambda_0+2\lambda_1-1)(t_1+t_2) - 2t_2 + 2(\lambda_0-1) )\dfrac{\mathbf{q}_1}{2} \\
& + ( -(2\lambda_0-1) t_1t_2\mathbf{p}_1 + 2(\lambda_0+\lambda_1-1)t_2 + 2 \lambda_0 - 1) t_1 + 2(\lambda_0-3)t_2 \; .
\end{align*}

\begin{prop}\label{prop:Garnier}
Let $q_1, q_2$ be the algebraic functions defined in Subsection \ref{sec:ResGenL}; then there exist two algebraic functions $p_1(t_1,t_2)$ and $p_2(t_1,t_2)$ such that $(q_1,q_2,p_1,p_2)$ is a solution of (\ref{Garnier2}).
\end{prop}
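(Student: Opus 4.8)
The strategy is to exhibit $q_1,q_2$ as the two apparent singularities of the Fuchsian family $\nabla_{\alpha,\beta}$, to build the conjugate momenta $p_1,p_2$ out of the same residue data, and then to invoke the classical equivalence between isomonodromic deformation of the five--punctured sphere and the Garnier--$2$ system.

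First I would record that the family $(\nabla_{\alpha,\beta})_{\alpha,\beta}$ is genuinely isomonodromic. This is built into the construction: $\nabla=\nabla_{\lambda_0,\lambda_1}$ is a single flat connection on $\PP^2-\Qr$, so its monodromy along the pencil of lines $y=\alpha x+\beta$ is locally constant in $(\alpha,\beta)$; concretely it is the representation of Table~\ref{table:monodromy}, which depends only on $\lambda_0,\lambda_1$ and not on the chosen line. After the Möbius normalization of Subsection~\ref{sec:ResGenL} sending the five poles to $0,1,t_1,t_2,\infty$, this is precisely an isomonodromic deformation with the two deformation times $(t_1,t_2)$ --- the right number for Garnier--$2$.

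Next I would define the momenta, following the Hitchin--style recipe already used in Subsection~\ref{sec:PVI}. The lower--left entry $\hat H_{2,1}$ vanishes exactly at the two apparent singularities $x=q_1,q_2$, which is the content of~(\ref{coeff}); I then set $p_i:=\hat H_{1,1}(q_i)$, the value at $x=q_i$ of the diagonal entry of $\hat H$, renormalized if necessary so that $(q_i,p_i)$ are Darboux coordinates (see also \cites{Mazz,Diar1}). Since Lemma~\ref{lem:RatPar} realizes $q_1,q_2$ as algebraic --- in fact rational in $(\alpha,z)$ --- functions of $(t_1,t_2)$, and since the entries of $\hat H$ are rational in $x$ with coefficients algebraic in $(t_1,t_2)$ by Table~\ref{table:Res_Nab}, the resulting $p_1,p_2$ are algebraic functions of $(t_1,t_2)$, as demanded.

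It then remains to identify the deformation equations with~(\ref{Garnier2}). By the general theory, for a rank two Fuchsian isomonodromic family on the five--punctured sphere the canonical coordinates $(q_i,p_i)$ evolve according to a non--autonomous Hamiltonian system whose Hamiltonians are the polynomial Hamiltonians attached to the local exponents $\tfrac{\lambda_1}{2},\tfrac{\lambda_0-1}{2},\tfrac14,\tfrac14,\tfrac{\lambda_0+\lambda_1}{2}$ recorded in Table~\ref{table:Res_Nab}. The main obstacle, and the true content of the proof, is to show that specializing this general Hamiltonian to our exponents reproduces \emph{exactly} the $H_k$ displayed before the statement --- in particular the prefactor $(-1)^k$ and the denominator $2(\mathbf q_1-\mathbf q_2)(t_1-t_2)(t_k-1)t_k$, which encode the normalization of the $p_i$ and the choice of time variables. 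In the spirit of this paper I would carry this out by direct computation: using Lemma~\ref{lem:RatPar} I write $q_i,p_i$ explicitly, differentiate with respect to $t_1,t_2$ while eliminating $\alpha,z$ by Gröbner bases as in that lemma, and check that the four equations constituting the system~(\ref{Garnier2}) are satisfied identically. Once this verification goes through, $(q_1,q_2,p_1,p_2)$ is by construction a solution.
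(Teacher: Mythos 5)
Your overall architecture --- take $q_1,q_2$ from (\ref{coeff}), manufacture conjugate momenta out of the same spectral data, and then verify by explicit computation that the specific Hamiltonians $H_k$ displayed before the statement are satisfied --- is the same verification-by-computation strategy the paper follows, and you are right that matching the explicit $H_k$ (not merely citing the general isomonodromy/Garnier correspondence) is the true content. But your plan for the computation rests on a false premise: you claim that Lemma \ref{lem:RatPar} realizes $q_1,q_2$ as ``in fact rational in $(\alpha,z)$'' functions. It does not. The lemma parametrizes only the \emph{symmetric} functions $S_q=q_1+q_2$ and $P_q=q_1q_2$ rationally; the individual roots involve $\sqrt{S_q^2-4P_q}$, and the paper states explicitly that no rational parametrisation of $(q_1,q_2)$ is known --- that is the very first sentence of its proof. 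Consequently your momenta $p_i:=\hat H_{1,1}(q_i)$ are likewise only algebraic, and the Gr\"obner-basis verification you describe, which treats $q_i,p_i$ as explicit rational expressions in $(\alpha,z)$, cannot be run as stated.

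The missing idea is precisely the device the paper introduces to get around this: rewrite the Garnier system in the symmetrized variables $S_\mathbf{q}$, $P_\mathbf{q}$, $S_\mathbf{p}=\mathbf{p}_1+\mathbf{p}_2$ and $\gamma=(\mathbf{p}_1-\mathbf{p}_2)/(\mathbf{q}_1-\mathbf{q}_2)$, which is possible because every expression occurring has even degree in $\delta:=\mathbf{q}_1-\mathbf{q}_2$, so $\delta$ can be eliminated through $\delta^2=S_\mathbf{q}^2-4P_\mathbf{q}$; all four symmetric quantities are then rational in $(\alpha,z)$ and the elimination-based check becomes meaningful. Note also that the paper does not define the momenta by a residue recipe: your $p_i=\hat H_{1,1}(q_i)$, ``renormalized if necessary so that $(q_i,p_i)$ are Darboux coordinates,'' leaves undetermined exactly the normalization that the verification against the displayed $H_k$ is sensitive to. Instead, the paper defines them implicitly by solving $\dr_{t_1}S_\mathbf{q}=(\dr_{\mathbf{p}_1}+\dr_{\mathbf{p}_2})H_1$ and $\dr_{t_1}P_\mathbf{q}=(\mathbf{q}_2\dr_{\mathbf{p}_1}+\mathbf{q}_1\dr_{\mathbf{p}_2})H_1$ --- linear equations in the momenta --- for $S_\mathbf{p}$ and $\gamma$ as explicit rational functions of $(\alpha,z)$, and then checks that the full symmetrized system holds. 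Replacing your parametrization claim and under-specified $p_i$ by this symmetrization-plus-implicit-definition step is what is needed to close your argument.
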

\begin{proof}
Since we know no rational parametrisation of $(q_1,q_2)$ we consider the "symmetrised" system:
\begin{displaymath}
\left\lbrace \begin{array}{clc} 
\dr_{t_k} S_\mathbf{q} & = (\dr_{\mathbf{p}_1} + \dr_{\mathbf{p}_2})H_k & k=1,2 \\ 
\dr_{t_k} P_\mathbf{q} &  =   (\mathbf{q}_2\dr_{\mathbf{p}_1} + \mathbf{q}_1\dr_{\mathbf{p}_2})H_k & k=1,2 \\
\dr_{t_k} S_\mathbf{p} &  = - (\dr_{\mathbf{q}_1} + \dr_{\mathbf{q}_2})H_k& k=1,2 \\
\dr_{t_k} \gamma &  = \dfrac{-1}{(\mathbf{q}_1-\mathbf{q}_2)^2} ( (\mathbf{q}_1-\mathbf{q}_2)(\dr_{\mathbf{q}_1} + \dr_{\mathbf{q}_2})+(\mathbf{p}_1-\mathbf{p}_2)(\dr_{\mathbf{p}_1} + \dr_{\mathbf{p}_2}))H_k& k=1,2 \\
\end{array} \right. \, ,
\end{displaymath}
where $S_\mathbf{p} := \mathbf{p}_1+\mathbf{p}_2$ and $\gamma = \dfrac{\mathbf{p}_1-\mathbf{p}_2}{\mathbf{q}_1-\mathbf{q}_2}$. To obtain this we first had to consider the variable $\delta := \mathbf{q}_1-\mathbf{q}_2$ and then eliminate it using the fact that all expressions obtained had even degree in $\delta$ and that $\delta^2 = S_\mathbf{q}^2-4P_\mathbf{q}$.

Assume that $(p_1,p_2)$ are two algebraic functions such that $(q_1,q_2,p_1,p_2)$ is a solution of (\ref{Garnier2}). Using the first two equations with $k=1$ one then gets $S_\mathbf{p}$ and $\gamma$ as functions of $\dr_{t_1} S_q$ and $\dr_{t_1} P_q$ which in turn (see Lemma \ref{lem:RatPar}) are rational functions of $(\alpha,t)$, namely:
\begin{align*}
\gamma=&-\dfrac{(\lambda_0+\lambda_1-1)(\alpha+1)(\alpha-{z}^2)^2(\alpha-1)}{2\alpha(\alpha-{z})(\alpha+{z})({z}+1)({z}-1)}\, , \\
Sp  = &\dfrac{(\alpha-{z}^2)}{2\alpha(\alpha-{z})(\alpha+{z})({z}+1)({z}-1)}\hat{S_p} \: ,
\end{align*}
with 
\begin{align*}
\hat{S_p} = & (\lambda_0 + 2\lambda_1-1)\alpha^3 \\
& + ( (2\lambda_0+\lambda_-2)z^2 - (3\lambda_0+\lambda_1-3) )\alpha^2\\
& + ( (\lambda_0-3\lambda_1+1)\alpha + (\lambda_0-1) )z^2 \: .
\end{align*}
This completes the rational parametrisation of all relevant variables and allows us check that $(S_q,P_q,S_p,\gamma)$ indeed satisfies the above system.
\end{proof}

We can describe more precisely the rational surface parametrising $q_1$ and $q_2$ as follows. Using the equations linking $(S_t,P_t)$ to $(P_q,S_q)$ and Gröbner bases one show that $S_q$ is root of a degree four polynomial with coefficients depending on $S_t,P_t$ (and thus on $t_1,t_2$) and that $P_q$ can be computed as a polynomial in $S_t,P_t$ and $S_q$. Therefore, there exists a polynomial $P \in \CC[X,T_1,T_2]$ of degree four in its first variable such that $P(S_q,t_1,t_2) = 0$ and so if one sets \begin{displaymath}
\Sigma := \lbrace x,t_1,t_2 \in \PP^1 \, | \, P(x,t_1,t_2) = 0 \rbrace
\end{displaymath}
then the projection $p : \Sigma \rightarrow \PP^1_{t_1} \times \PP^1_{t_2}$ is a fourfold ramified covering, whose holonomy we can fully describe.

\begin{prop}
The holonomy representation into $\mathfrak{S}_4$ of the covering $p$ is trivial at $t_1 = t_2$ and is a double transposition at $t_i=0,1,\infty$ ($i=1,2$).
\end{prop}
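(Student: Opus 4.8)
The plan is to transport the whole question to the rational parametrisation of Lemma \ref{lem:RatPar}. Since $\Sigma$ is cut out by $P(S_q,t_1,t_2)=0$ and Lemma \ref{lem:RatPar} exhibits $(S_q,t_1,t_2)$ as explicit rational functions of $(\alpha,z)$, the induced map $\Phi:(\alpha,z)\dashrightarrow\Sigma$ is birational: over a generic $(t_1,t_2)$ the coordinate $S_q$ separates the four preimages. Consequently the holonomy of $p$ coincides, through this birational identification, with the monodromy of the degree-four covering $\psi:=p\circ\Phi:(\alpha,z)\mapsto(t_1,t_2)$ defined by the two explicit formulas for $t_1,t_2$, and it suffices to analyse $\psi$.

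First I would invert $\psi$. Setting $\theta:=\tfrac{z+1}{z-1}$ one checks from the formulas of Lemma \ref{lem:RatPar} that $\theta^2=t_1/t_2$, so that $\theta=\pm\sqrt{t_1/t_2}$ recovers $z$ (two values), and that for each such $z$ the parameter $\alpha$ is a root of the quadratic $t_1\alpha^2+\bigl((z+1)^2-t_1(1+z^2)\bigr)\alpha+t_1z^2=0$ (two values). Thus $\psi$ is a tower of two double covers, and I label its four sheets by a pair of signs $(\epsilon,s)$, where $\epsilon$ chooses the square root $\theta=\epsilon\sqrt{t_1/t_2}$ and $s$ chooses the root of the $\alpha$--quadratic. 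The outer cover $\theta=\sqrt{t_1/t_2}$ ramifies exactly over $t_i=0$ and $t_i=\infty$. The inner cover ramifies over the vanishing of the discriminant of the $\alpha$--quadratic, which factors as $(z+1)^2(1-t_1)\bigl((z+1)^2-t_1(z-1)^2\bigr)$; since $(z+1)^2/(z-1)^2=\theta^2=t_1/t_2$, this vanishes precisely at $t_1=1$ and $t_2=1$. A short analysis of when two sheets collide (the two values of $z$ coincide only for $z=\pm1$, i.e. over $t_i\in\{0,\infty\}$; the two $\alpha$--roots coincide only over $t_i=1$) then shows that the branch locus of $\psi$ is contained in $\{t_i\in\{0,1,\infty\}\}$.

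With this description the holonomy is read off divisor by divisor. Over a generic point of the diagonal $\{t_1=t_2\}$ none of these loci is met, so $\psi$ is étale there and the holonomy around $\{t_1=t_2\}$ is trivial. Around $t_i=0$ and around $t_i=\infty$ the ratio $t_1/t_2$ winds once about $0$ or $\infty$, so $\theta\mapsto-\theta$ interchanges the two $z$--sheets while the $\alpha$--roots vary holomorphically; this monodromy is an involution that moves all four sheets (it exchanges the halves $\{\epsilon=+\}$ and $\{\epsilon=-\}$), hence is a double transposition. Around $t_i=1$ the root $\sqrt{t_1/t_2}$ is unramified, but the $\alpha$--quadratic acquires a double root for both values of $z$ at once (the offending discriminant factor is common to both $z$--sheets), so its two roots are exchanged simultaneously on each $z$--sheet: again an involution moving all four sheets, a double transposition.

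The delicate points are the following. First, the behaviour at $t_i=\infty$: here it is essential to work with the ratio $t_1/t_2$ rather than with a symmetric expression such as $t_1t_2$, since only the former is well behaved at infinity and exhibits the single $\mathbb{Z}/2$ ramification responsible for the double transposition. Second, one must make sure that at each divisor $t_i\in\{0,1,\infty\}$ the local monodromy has order exactly two, which is what forces the cycle type $2+2$ rather than a $4$--cycle; this holds because going twice around any of these divisors restores both $\theta$ and the unordered pair of $\alpha$--roots to their initial positions. Finally, the one genuinely global verification is that no further ramification hides along the diagonal, i.e. that all coincidences of sheets occur only over $t_i\in\{0,1,\infty\}$; this follows from the collision analysis above, or alternatively by pulling back through the quotient $\PP^1\times\PP^1\to\mathrm{Sym}^2\PP^1$ and observing that a small loop about the diagonal maps to a loop winding twice about the discriminant, so that the corresponding holonomy is the square of an involution and hence trivial.
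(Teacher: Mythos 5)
Your overall strategy---identifying $p$ birationally with the explicit degree-four map $\psi:(\alpha,z)\mapsto(t_1,t_2)$ from Lemma \ref{lem:RatPar} and reading the holonomy off a tower of two double covers---is sound and genuinely different from the paper's proof, which factorises $P$ itself over each special divisor and handles the divisor at infinity by two elementary transformations. Your formulas are correct: $\theta^2=t_1/t_2$, the $\alpha$--quadratic, and its discriminant $(z+1)^2(1-t_1)\bigl((z+1)^2-t_1(z-1)^2\bigr)$ all check out, and your treatment of $t_i=1$ and of the diagonal is fine. (One small point: the birationality of $\Phi$ is better justified by a degree count, $\deg\psi=4=\deg p$ together with irreducibility of $P$, than by the unproved assertion that $S_q$ separates the four preimages.)

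The genuine gap is at $t_i=\infty$. Your collision analysis claims the two $\alpha$--roots coincide only over $t_i=1$, but this misreads the discriminant: the factor $(z+1)^2$ vanishes on $\theta=0$, which lies over $\{t_1=0\}\cup\{t_2=\infty\}$, and the relevant quantity at infinity is the normalised root difference $\Delta_\alpha/t_1^2$, not $\Delta_\alpha$ itself. Concretely, over $t_2=\infty$ the quadratic becomes $t_1(\alpha-1)^2$, and over $t_1=\infty$ it becomes $(\alpha-1)^2$ after division by $t_1$: \emph{all four} sheets of $\psi$ collide at the single point $(\alpha,z)=(1,\pm1)$. This matches the paper's observation that $P$ has one root of order four there, and it is precisely why that divisor is the delicate one. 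So the assertion that around $t_i=\infty$ ``the $\alpha$--roots vary holomorphically'' is false as written, and your back-up argument (going around twice restores the unordered pair of $\alpha$--roots) cannot distinguish a double transposition from a $4$--cycle, since the square of a $4$--cycle also preserves each unordered pair while swapping its members. The missing step is an even-order vanishing check on the local double cover: using $\theta^2=t_1/t_2$ one finds that the square of the root difference equals $(z+1)^4(1-t_1)(1-t_2)/t_1^2$, which, in the coordinate $\theta'$ with $\theta'^2=t_2/t_1$, vanishes to order exactly $2$ along $\theta'=0$; hence the two colliding $\alpha$--roots are single-valued holomorphic functions of $\theta'$ and are \emph{not} permuted, so the monodromy is the pure $\theta'$--swap, i.e.\ a double transposition. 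A similar rescaling (dividing by $(z+1)^2$) is needed over $t_1=0$, where all three coefficients of your quadratic vanish; there the rescaled roots stay distinct and your conclusion again survives, but neither divisor is covered by the argument as you wrote it.
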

\begin{proof}
Since $(S_q,P_q)$ is solution of a Garnier system, we know that this covering can only ramify over $t_i = 0, 1 , \infty$ ($i=1,2$) or $t_1 = t_2$. To better understand the way it does, let us look into its holonomy representation, which is a mapping from the fundamental group $G$  of the complement of the ramification locus in $\PP^1 \times \PP^1$ into the symmetric group $\mathfrak{S}_4$. By explicitly factorising the polynomial $P$ over all components of the possible ramification locus one gets that:
\begin{itemize}
\item over $t_i = 0$ ($i=1,2$) the polynomial has two double roots;
\item over $t_i = 1$ ($i=1,2$), the situation is the same
\item over $t_i = \infty$ ($i=1,2$), there is only one order four root;
\item over $t_1 = t_2$ the polynomial has four simple roots (the covering doesn't actually ramify there).
\end{itemize}
If one looks (for example) at the restricted polynomial $P(Sq,t_1,7)$ one can see that its discriminant has a double root at $t_1 = 1$ and that the same is true should one exchange the roles of $t_1$ and $t_2$; this means that the holonomy around $t_i = 0,1$ is a double transposition. Moreover, it takes two elementary transforms to turn the ramification at infinity into two double roots with the discriminant in $Sq$ having a double root there. The holonomy being invariant under birational morphisms, it is also a double transposition.
\end{proof}

\begin{coro}
The complex surface $\Sigma$ is rational.
\end{coro}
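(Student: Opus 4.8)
The plan is to exhibit $\Sigma$ as a \emph{unirational} surface and then invoke Castelnuovo's rationality criterion. The rational parametrisation of Lemma~\ref{lem:RatPar} is exactly the input needed: the formulas expressing $t_1,t_2,S_q$ as rational functions of $(\alpha,z)$ assemble into a rational map
\[
\Psi : \PP^1_\alpha \times \PP^1_z \DashedArrow[->,densely dashed] \Sigma, \qquad (\alpha,z)\longmapsto (S_q,t_1,t_2),
\]
whose image lands in $\Sigma$ precisely because, by the construction of $P$ through Gröbner elimination, $S_q(\alpha,z)$ is a root of $P(\,\cdot\,,t_1,t_2)$ for every $(\alpha,z)$.

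First I would record that $\Sigma$ is irreducible. By the preceding proposition the holonomy of $p:\Sigma\to\PP^1_{t_1}\times\PP^1_{t_2}$ around each component of its ramification locus is either trivial or a double transposition, and distinct double transpositions arise around $t_1=0$ and $t_2=0$ (the two being exchanged under $t_1\leftrightarrow t_2$). Since any two distinct double transpositions in $\mathfrak{S}_4$ generate the Klein four-group $V_4$, which acts transitively on the four sheets, the monodromy group of $p$ is transitive; hence $\Sigma$ is connected, i.e.\ an irreducible surface. Next I would check that $\Psi$ is dominant: composing $\Psi$ with $p$ recovers the map $(\alpha,z)\mapsto(t_1,t_2)$, which is visibly dominant onto $\PP^1_{t_1}\times\PP^1_{t_2}$ (its generic fibre is finite and nonempty, as one reads off the explicit expressions for $t_1,t_2$). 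Thus the image of $\Psi$ is two-dimensional inside the irreducible surface $\Sigma$, so $\Psi$ dominates $\Sigma$ and $\Sigma$ is unirational.

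Finally, over $\CC$ every unirational projective surface is rational, by Castelnuovo's rationality criterion (see e.g.\ Beauville, \emph{Complex Algebraic Surfaces}); applying this to $\Sigma$ concludes the proof. The only genuinely nonformal ingredient is Castelnuovo's theorem, and the point to watch is that $\Psi$ must dominate all of $\Sigma$ rather than a single sheet of the covering $p$ — which is exactly why the irreducibility furnished by the holonomy proposition is invoked. Should one wish to bypass Castelnuovo's theorem, an alternative is to verify by a resultant or Gröbner computation that $\Psi$ is birational onto its image, giving an explicit birational chart on $\Sigma$; but the unirationality argument above is shorter and more transparent.
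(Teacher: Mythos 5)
Your strategy (unirationality via Lemma~\ref{lem:RatPar}, then Castelnuovo's theorem) is sound in outline and genuinely different from the paper's, but the irreducibility step, which you correctly identify as the crux, is wrong as written. The proposition on the holonomy of $p$ only says that the holonomy around each of the divisors $t_i=0,1,\infty$ is a double transposition; it does not compare the holonomies around different components of the branch locus, and ``being exchanged under $t_1\leftrightarrow t_2$'' does not make two group elements distinct. In fact your claim is false: the parametrisation of Lemma~\ref{lem:RatPar} realises the degree-four extension $\CC(\alpha,z)\supset\CC(t_1,t_2)$ as a Galois extension whose group is the Klein four-group generated by the deck involutions $\sigma\colon(\alpha,z)\mapsto(z^2/\alpha,z)$ and $\tau\colon(\alpha,z)\mapsto(1/\alpha,1/z)$ (both preserve $t_1$ and $t_2$); the divisor $(t_1=0)$ pulls back to $(z=-1)$ and $(t_2=0)$ to $(z=1)$, and along \emph{both} of these curves the inertia is generated by the same involution $\sigma\tau\colon(\alpha,z)\mapsto(\alpha/z^2,1/z)$, which fixes them pointwise. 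So the holonomies around $(t_1=0)$ and $(t_2=0)$ coincide, your two ``distinct'' double transpositions are equal, and the transitivity argument collapses, leaving open the possibility that $\Psi$ dominates only one component of $\Sigma$. (Your fallback, checking that $\Psi$ is birational onto its image, suffers from the same gap: one still needs to know the image is all of $\Sigma$.)

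The gap can be closed with the ingredients at hand. Restrict $p$ to a generic line $(t_2=c)$: one gets a four-sheeted cover of $\PP^1_{t_1}$ whose only possibly nontrivial holonomies $h_0,h_1,h_\infty$ sit over $t_1=0,1,\infty$ (the holonomy at $t_1=t_2$ being trivial by the proposition), and they satisfy $h_0h_1h_\infty=1$. Each $h_i$ is a double transposition, hence an involution, so if two of them were equal the third would be the identity, a contradiction; therefore they are pairwise distinct, generate the Klein four-group, the monodromy is transitive, and the generic fibre, hence $\Sigma$ itself, is irreducible. Alternatively, since the proposition's proof records that $P$ has four simple roots over $t_1=t_2$, the four branches of $S_q$ coming from the four preimages of a generic $(t_1,t_2)$ under $(\alpha,z)\mapsto(t_1,t_2)$ are distinct, hence exhaust the roots of the quartic $P$; then the closure of the image of $\Psi$ is all of $\Sigma$, and $\Psi$ is even birational, so one gets rationality without Castelnuovo. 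With irreducibility restored, your dominance argument and the appeal to Castelnuovo are correct; note that the paper itself proceeds differently, applying Riemann--Hurwitz to the connected four-fold cover of $\PP^1$ branched over three points with double-transposition holonomy to find genus-zero fibres, and then quoting the rationality of $\PP^1$-fibrations over $\PP^1$ (Noether--Enriques) rather than unirationality plus Castelnuovo.
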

\begin{proof}
By setting $t_1$ or $t_2$ to any value distinct from $0,1, \infty$, one gets a fourfold covering from some curve $C$ onto $\PP^1$ ramifying over $0,1$ and $\infty$. The Riemann--Hurwitz formula yields that the curve $C$ is of genus zero, meaning that it is necessarily a rational curve. This proves that the surface $\Sigma$ is a fibration over $\PP^1$ with general fibre isomorphic to $\PP^1$ and so is in fact rational (see for example \cite{Kollar}).
\end{proof}

\section{Lotka--Volterra foliations}\label{sec:LV}

\noindent In order to prove Theorem \ref{TheoC}, let us first define the following notion (see \cite{Scardua}).

\begin{defi}[Transversally projective foliation]
Let $M$ be a smooth projective complex manifold; a codimension one foliation $\Fr$ on $M$ (defined by a Frobenius--integrable nonzero rational one--form $\omega_\Fr$) is said to be \emph{transversally projective} if there exist two rational one--forms $\alpha, \beta$ over $M$ such that
\begin{displaymath}
 \dd + \begin{pmatrix}
\alpha & \beta \\ 
\omega_\Fr & -\alpha
\end{pmatrix} 
 \end{displaymath} 
defines a flat $\mathfrak{sl}_2(\CC)$--connection over the rank two trivial bundle $\CC^2 \times M$.
\end{defi}

If one looks at the restriction $\omega$ of the Riccati one--form $\Ric(\nabla)$ to $(w=\infty)$ one obtains a codimension one transversally projective foliation $\Fr$ over the projective plane $\PP^2$; indeed, if
\begin{displaymath}
 \Ric(\nabla) = \dd w + \omega w^2 + 2 \alpha w + \beta
\end{displaymath}
then
\begin{displaymath}
 \dd + \begin{pmatrix}
\alpha & \beta \\ 
\omega & -\alpha
\end{pmatrix} 
 \end{displaymath}
 is gauge--equivalent to $\nabla$ and as such is a flat $\mathfrak{sl}_2(\CC)$--connection over $\CC^2 \times \PP^2$. The one--form $\omega$ can be written in the affine chart $\CC^2_{x,y}\subset \PP^2$ described in Subsection \ref{sec:Rk2VB} as:
\begin{displaymath}
\omega = ((2\lambda_0+\lambda_1)x+\lambda_1(y-1)) y\dd x  -((\lambda_0+2\lambda_1)y+\lambda_0(x-1)) x \dd y
\end{displaymath}
This foliation's invariant locus contains the singular locus of $\nabla$, namely the quintic $\Qr$ and has seven order one singularities, namely (in homogeneous coordinates $[x:y:t]$ chosen so that our usual affine chart corresponds to $t=1$) $[0:0:1],\,[0:1:1],\,[1:0:1], \, [\lambda_1^2 : \lambda_0^2:(\lambda_0+\lambda_1)^2], \, [1:1:0], \, [0:1:0]$ and $[1:0:0]$. Also note that this foliation only depends on the quotient $\lambda := \dfrac{\lambda_0}{\lambda_1}$; indeed it is equivalent to:
\begin{displaymath}
((2\lambda+1)x+y-1) y\dd x  -((\lambda+2)y+\lambda(x-1)) x \dd y = 0 \; .
\end{displaymath}
Also note that every singular point of the above foliation lies on the quintic $\Qr$.
\begin{center}
\begin{figure}

\begin{pspicture}(0,-3.85)(11.02,3.85)
\definecolor{color3}{rgb}{0.00392156862745098,0.00392156862745098,0.00392156862745098}
\definecolor{color14}{rgb}{1.0,0.3764705882352941,0.0}
\pscircle[linewidth=0.04,dimen=outer](6.37,-0.28){1.79}
\psline[linewidth=0.04cm,linecolor=color3](0.0,-3.83)(7.5,3.23)
\psline[linewidth=0.04cm,linecolor=color3](4.16,3.83)(10.96,-1.71)
\psline[linewidth=0.04cm,linecolor=color3](0.04,-3.47)(11.0,-1.11)
\usefont{T1}{ptm}{m}{n}
\rput(4.5628123,-1.9){$\Cr$}
\psdots[dotsize=0.12,linecolor=color14](6.3,2.11)
\psdots[dotsize=0.12,linecolor=color14](7.46,1.13)
\psdots[dotsize=0.12,linecolor=color14](5.18,1.01)
\psdots[dotsize=0.12,linecolor=color14](6.72,-1.99)
\psdots[dotsize=0.12,linecolor=color14](10.38,-1.23)
\psdots[dotsize=0.12,linecolor=color14](0.52,-3.37)
\end{pspicture} 

\caption{Singular locus for the foliation $\Fr$.}
\end{figure}
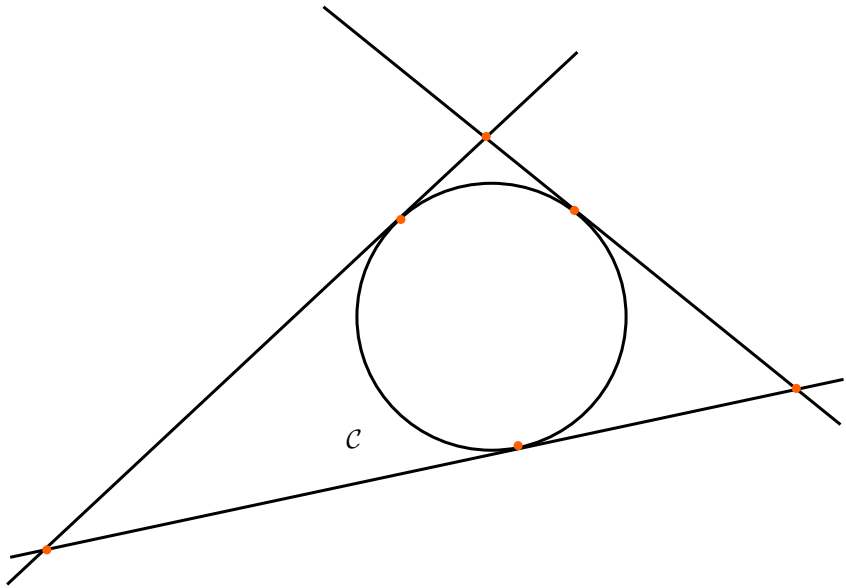
\end{center}

Now define, given three complex parameters $(A,B,C)$, the Lotka-Volterra vector field over $\CC^3$ (with coordinates $x,y,t$) as $\mathrm{LV}(A,B,C) := V_x \dr_x +V_y \dr_y + V_t \dr_t$, where:
\begin{displaymath}
V_x := x(Cy+t), \qquad V_y := y(At+x) \qquad \text{ and } \qquad Vt := t(Bx+y) \; .
\end{displaymath}
This system traditionally comes from the study of a "food chain" system with $3$ species preying on each other in a cycle. One can then \cite{MO, MO_cor} consider the foliation defined by both $\mathrm{LV}(A,B,C)$ and the radial vector field $R:= x \dr_x + y \dr_y + t \dr_t$: it is the codimension one foliation over $\CC^3$ associated with the one--form
\begin{displaymath}
\omega_0 := (yV_t-tV_y)\dd x + (tV_x - xV_t)\dd y + (xV_y - yV_x) \dd t \; .
\end{displaymath}

\subsection{Proof of Theorem \ref{TheoC}}
To prove Theorem \ref{TheoC}, one needs only show that the foliations defined by the one--forms $\omega$ and $\omega_0^\prime := {\omega_0}_{|_{t=1}}$ are the same in some affine chart. Each of the aforementioned one--forms has four singular points, namely 
\begin{displaymath}
(0,0) \; ,  \left( \dfrac{1}{B}, 0 \right), \; (0,A) \; \text{ and } \left( \dfrac{A(C-1)+1}{C(B-1)+1}, \dfrac{B(A-1)+1}{C(B-1)+1} \right) \quad \text{ for }\omega_0^\prime
\end{displaymath}
and
\begin{displaymath}
(0,0) \; ,  \left( 1, 0 \right), \; (0,1) \; \text{ and } \left( \dfrac{\lambda_1^2}{(\lambda_0+\lambda_1)^2}, \dfrac{\lambda_0^2}{(\lambda_0+\lambda_1)^2} \right) \quad \text{ for }\omega \; .
\end{displaymath}
We then submit $\omega_0^\prime$ to an affine change of coordinates to send its first three singular points onto $(0,0)$, $(1,0)$ and $(0,1)$. A necessary condition for the two forms to define the same foliation is then that their fourth singularities be equal; after computation we find that one must have:
\begin{equation}
\dfrac{B(A(C-1)+1)}{C(B-1)+1} = \dfrac{\lambda_1^2}{(\lambda_0+\lambda_1)^2}
\end{equation}
and
\begin{equation}
\dfrac{B(A-1)+1}{A(C(B-1)+1)} = \dfrac{\lambda_0^2}{(\lambda_0+\lambda_1)^2} \; .
\end{equation}
Solving the two above equations, one obtains a rational parametrisation of $A$ and $C$ by $B$, namely:
\begin{displaymath}
A = \dfrac{(B-1)\lambda_1}{(2 \lambda_0+\lambda_1)B} \quad \text{ and } \quad C =- \dfrac{2B(\lambda_0+\lambda_1)^2 + \lambda_0 \lambda_1}{\lambda_0\lambda_1(B-1)} \; .
\end{displaymath}
A necessary and sufficient condition for the two associated foliation to coincide is that $\omega \land \omega_0^\prime = 0$; using this and the above parametrisation one gets that $B$ must be equal to $-\dfrac{\lambda_0}{\lambda_0 + \lambda_1}$ and thus obtains the first par of Theorem \ref{TheoC}. 

Conversely, direct computation shows that any degree two foliation over $\PP^2$ whose invariant locus contains the quintic $\Qr$ can be written in the affine chart $(s,p)$ as
\begin{displaymath}
((\gamma_1+2\gamma_2) x + \gamma_1 (y -1)) y \dd x - ( (2\gamma_1+\gamma_2) y + \gamma_2(x-1)) x \dd y \; 
\end{displaymath}
with $\gamma_1, \gamma_2 \in  \CC$. In particular, such a foliation automatically comes from the monodromy representation of one of our connections $\nabla_{\lambda_0, \lambda_1}$, with $\lambda_0 = \gamma_2$ and $\lambda_1 = \gamma_1$.

\begin{rema}$ $
\begin{enumerate}
\item The relation $ABC = 1$ obtained in Theorem \ref{TheoC} can be seen intuitively as coming from the order $3$ symmetry of the quintic $\Qr$: indeed if one denotes by $J$ the homographic order $3$ transform defined on $\PP^1$ by
\begin{displaymath}
z \mapsto - \dfrac{1}{1+z}
\end{displaymath}
then one has $$(A,B,C) = \left( \dfrac{\lambda_1}{\lambda_0}, J\left(\dfrac{\lambda_1}{\lambda_0}\right), J^2\left(\dfrac{\lambda_1}{\lambda_0}\right)\right) \; . $$
\item The two variables Lotka--Volterra system is usually defined as being following "prey--predator" differential system:
\begin{displaymath}
\left\lbrace \begin{array}{l}
x^\prime =x ( \alpha + \beta y )\\ 
y^\prime  = y (  \gamma + \delta x )  
\end{array} \right. \; ,
\end{displaymath}
to model an ecosystem where $x$ preys on $y$. However, the plane foliation associated with this system cannot be conjugate to the one associated with $\omega$ as it has two double singular points whereas $\omega$ has seven simple singularities. Thus this gives some form of justification to the fact that we chose to consider a three variables system in this paragraph (as opposed to the more "natural" two variables one).
\end{enumerate}
\end{rema}

\subsection{Invariant curves}

The invariant locus of the family of foliations presented here does not have normal crossings, hence the Cerveau--Lins Neto bound on the degree ($\deg(\Fr) + 2$, see \cite{CerLins}) does not apply here. Furthermore one may note that (for generic parameters $\lambda_0, \lambda_1)$ the foliation $\Fr$ has simple singularities at the tangency locus of the conic $\Cr$ and the three invariant lines. Moreover, we have the following result.

\begin{prop}
The foliation $\Fr$ admits, for $\lambda_0,\lambda_1 \in \QQ$, invariant algebraic curves of arbitrarily high (depending on $\lambda_0/\lambda_1$) degree.
\end{prop}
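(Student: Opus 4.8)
The plan is to lift the foliation $\Fr$ through the double cover $\pi\colon X\to\PP^2$ and to exhibit an explicit multivalued first integral that becomes algebraic exactly when $\lambda_0,\lambda_1$ are rational. First I would identify the pullback $\pi^*\Fr$. Recall that $\Fr=\{\omega=0\}$ is precisely the locus along whose leaves the section $w=\infty$ of $\Ric(\nabla)$ is invariant, since the coefficient of $w^2$ vanishes there. Under the birational identifications of Subsections \ref{sec:Rk1PB}--\ref{sec:TrivialB} the section $w=\infty$ corresponds to $z=1$, and the restricted Riccati form $\Ric(\nabla_0)=\dd z+z\,\omega_0$ leaves $z=1$ invariant along a curve $\tilde L\subset X$ if and only if $\omega_0|_{\tilde L}=0$; hence $\pi^*\Fr=\{\omega_0=0\}$. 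Since
\begin{displaymath}
\omega_0=\dd\log G,\qquad G:=\frac{u_0^{\lambda_0}(u_0-1)^{\lambda_1}}{u_1^{\lambda_0}(u_1-1)^{\lambda_1}},
\end{displaymath}
the foliation $\{\omega_0=0\}$ admits $G$ as a (multivalued) first integral. As a consistency check, the components of $\Qr$ already occur as degenerate fibres: $G=0,\infty$ give the lines $(y=0),(x=0)$ while $G=1$ is the fixed fibre $\pi(\Delta)=\Cr$.

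Next I would algebraise this first integral. Since $\Fr$ — and likewise $\{\omega_0=0\}$ — depends only on the ratio $\lambda_0/\lambda_1$, I may represent a rational ratio by coprime integers $(p_0,p_1)$ with $p_0/p_1=\lambda_0/\lambda_1$ and replace $(\lambda_0,\lambda_1)$ by $(p_0,p_1)$ without changing the foliation. Then
\begin{displaymath}
\tilde G:=\frac{u_0^{p_0}(u_0-1)^{p_1}}{u_1^{p_0}(u_1-1)^{p_1}}
\end{displaymath}
is a genuine rational function on $X$ and still a first integral of $\{\omega_0=0\}$, so its generic level set $C_c:=\{\tilde G=c\}$ is an algebraic leaf of $\pi^*\Fr$. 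Because $\tilde{\eta}$ swaps $u_0,u_1$ it sends $\tilde G\mapsto\tilde G^{-1}$, hence $\tilde G+\tilde G^{-1}$ is $\tilde{\eta}$--invariant and descends to a rational function $h$ on $\PP^2$ satisfying $\dd h\wedge\omega=0$; thus $h$ is a rational first integral of $\Fr$, every leaf $\pi(C_c)=\{h=\mathrm{const}\}$ is algebraic, and $\Fr$ is algebraically integrable for $\lambda_0,\lambda_1\in\QQ$.

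Finally I would bound the degree. On $X=\PP^1\times\PP^1$ the curve $C_c$ has bidegree $(d,d)$ with $d=|p_0|+|p_1|$ when $p_0,p_1$ have the same sign and $d=\max(|p_0|,|p_1|)$ otherwise, so in all cases $d\ge\max(|p_0|,|p_1|)$. For generic $c$ one has $\tilde{\eta}(C_c)=C_{1/c}\ne C_c$, so $\pi|_{C_c}$ is birational onto its image, and intersecting with the pullback $\pi^*\ell$ of a generic line (of bidegree $(1,1)$) gives
\begin{displaymath}
\deg\pi(C_c)=C_c\cdot\pi^*\ell=2d\ \ge\ 2\max(|p_0|,|p_1|).
\end{displaymath}
Letting the ratio $p_0/p_1$ range over rationals of arbitrarily large height — while avoiding the degenerate values $0,-1,\infty$ for which $\lambda_0\lambda_1(\lambda_0+\lambda_1)=0$ — then produces invariant curves of unbounded degree, proving the proposition. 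The one genuinely delicate point is the identification $\pi^*\Fr=\{\omega_0=0\}$; I would secure it either through the invariant--section argument above or, more safely, by the direct verification $\pi^*\omega\wedge\omega_0=0$ using $x=(u_0-1)(u_1-1)$ and $y=u_0u_1$, after which the remaining degree bookkeeping (injectivity of $\pi$ on $C_c$ and the exact image degree) is routine.
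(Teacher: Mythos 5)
Your proof is correct and follows essentially the same route as the paper: lift $\Fr$ through $\pi$ to the foliation $\{\omega_0=0\}$ on $X=\PP^1\times\PP^1$, observe that rational parameters make it algebraically integrable, and push invariant curves back down to $\PP^2$, tracking the degree. The differences are only in execution — where the paper invokes finite holonomy abstractly and exhibits the single family $u_0^n(u_0-1)-u_1^n(u_1-1)=0$ whose image has degree $n+2$ ``by induction'', you write the rational first integral $\tilde G$ explicitly, symmetrize it as $\tilde G+\tilde G^{-1}$ to descend to $\PP^2$, and bound the degree of generic fibres by the projection formula, which is a somewhat more detailed rendering of the same argument (and, like the paper, leaves irreducibility of the invariant curves untouched).
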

\begin{proof}
The section $(w = \infty)\subset \PP^1 \times \PP^2$ that we used to define our foliations lifts through $\pi:\PP^1 \times \PP^1 \xrightarrow[]{2:1}\PP^2$ (see Subsection \ref{sec:Rk2VB}) to the section $(z=1)$ of the trivial bundle $X \times \PP^1$ (see Subsection \ref{sec:Rk1PB}) and so the foliation itself lifts (in our usual local chart) to:
\begin{displaymath}
(\Fr^\prime) \qquad \lambda_0 \left(\dfrac{\dd u_0}{u_0}-\dfrac{\dd u_1}{u_1} \right) + \lambda_1 \left(\dfrac{\dd u_0}{u_0-1}-\dfrac{\dd u_1}{u_1-1} \right) = 0 \; \; .
\end{displaymath}
If one looks at rational values of $\lambda_0$ and $\lambda_1$, one gets a foliation $\Fr$ with finite holonomy which as a consequence admits a rational first integral. Moreover, in that particular case every leaf is an algebraic invariance curve and it is possible to find these with arbitrarily high degree (for varying $\lambda_0, \lambda_1$). For example, if $\lambda_0 = n \geq 1$ is a positive integer and if we set $\lambda_1 = 1$ then a simple computation shows that the curve
\begin{displaymath}
(C_n) \qquad u_0^n(u_0-1) - u_1^n(u_1-1) = 0 
\end{displaymath}
on $X$ is invariant under $\Fr^\prime$. An induction then shows that this curve is the pullback by $\pi$ of a degree $2+n$ curve on $\PP^2$ and so we get an invariant curve of such degree for the foliation $\Fr$ corresponding with the parameters $(n,1)$. 
\end{proof}

\begin{rema}
Note however that this is a slightly weaker example than the ones given in \cite{ALN} as the local type of our singularities depends on the parameter $\lambda_0/\lambda_1$.
\end{rema}

\section{Proof of Theorem \ref{TheoB}} In this paragraph, we prove that our family of monodromy representations cannot be generically obtained through a pullback method \cite{Diar1, Diar2} by showing that it does not factor through a curve \cite{CorSim}.

\subsection{First case: $\lambda_0$ and $\lambda_1$ are not linearly dependant over $\ZZ$}

Suppose that we have some complex projective curve $C$, a divisor $\delta = t_1+ \ldots + t_k $ in $C$, an algebraic mapping $f :\PP^2 - f^{-1}(\delta) \rightarrow C - \delta$ and a representation $\tilde{\rho}$ of the fundamental group of $C-\delta$ into $PSL_2(\CC)$ satisfying the conditions stated in Definition \ref{def:factorCurve}. In particular, the diagram
\begin{displaymath}
\xymatrix{
\pi_1(C-\delta,x_0)  \ar[rd]^{\tilde{\rho}} & \pi_1( \PP^2 - f^{-1}(\delta)) \ar[d]^{\mathrm{P}\circ\rho\circ m} \ar[l]_{ f_*}\\
 & PSL_2(\CC) 
}
\end{displaymath}
commutes. Since the ramified covering $\pi : X \xrightarrow[]{2:1} \PP^2$ is unramified between $X-D$ and $\PP^2 - \Qr$, where $D$ is the divisor in $X$ made of the six lines $u_0,u_1=0,1,\infty$ and the diagonal $\Delta = (u_0=u_1)$, then the fundamental group $\pi_1(X-D)$ is realised as a subgroup of $\Gamma$. This means that if one sets $\phi := f \circ \pi$ one has such a diagram:
\begin{displaymath}
\xymatrix{
\pi_1(C-\delta,x_0) \ar[rd]^{\tilde{\rho^\prime}} &  \pi_1(X-\phi^{-1}(\delta)) \ar[d]^{\rho^\prime} \ar[l]_{ \phi_*}\\
 & PSL_2(\CC) 
} \; .
\end{displaymath}

Now let $L$ be a generic horizontal line in $X$ (i.e of the form $(u_1=c)$, with $c\neq 0,1,\infty$); since $f$ is algebraic the restricted map $\phi_{|_L}$ extends as a ramified covering $\phi_L : L \rightarrow C$ with topological degree equal to some $d \geq 1$. The line $L$ is isomorphic to $\PP^1$, so the Riemann--Hurwitz formula forces the genus of the curve $C$ to be equal to zero; as such we can assume without loss of generality that $\phi_L$ is a $d$--fold covering of the projective line over itself. Moreover, one has that $\phi_L^* \delta$ must contain $\lbrace 0,1,c,\infty \rbrace$.

The representation $\tilde{\rho^\prime}$ must induce infinite order monodromy about at least one loop in $C-\delta$, say $\gamma_0$, or else all elements in the image of $\rho$ would be of finite order. This means that $M:=\tilde{\rho^\prime}(\gamma_{0})$ is a infinite-order element in $PSL_2(\CC)$.

Let us assume that there are at least two distinct elements $\gamma$ and $\gamma^\prime$ in the fibre of $(\phi_L)_*$ above $\gamma_0$; then both $\rho^\prime(\gamma)$ and $\rho^\prime(\gamma^\prime)$ must be powers of $M$. This gives us a relation between words in the matrices
\begin{displaymath}
\begin{pmatrix}
a_0 & 0 \\ 
0 & a_0^{-1}
\end{pmatrix} \; , \qquad \begin{pmatrix}
a_1 & 0 \\ 
0 & a_1^{-1}
\end{pmatrix} \text{ and }\begin{pmatrix}
a_0a_1 & 0 \\ 
0 & (a_0a_1)^{-1} \end{pmatrix}\quad ,
\end{displaymath}
where $a_j = e^{-i\pi \lambda_j}$. Since generically $\lambda_0$ and $\lambda_1$ are not linearly dependant, this is impossible; hence we have that the fibre $(\phi_L)_*^{-1}(\gamma_0)$ may only contain one element. This implies that $\phi_L$ ramifies totally over (at least) three points in $C$ and so the Riemann--Hurwitz formula yields that $\phi_L$ must be one--to--one.

Let $u \in \PP^1$ and set $h_{u} \in PSL_2(\CC)$ to be the Möbius transform sending the ramification locus of $\phi_{(u_1=u)}$ onto ${0,1,\infty}$; up to composing it with $(u_0,u_1) \mapsto (h_{u_1}(u_0),u_1)$ we can assume that $\phi$ is exactly the first projection $\mathrm{pr}_1 : X \rightarrow \PP^1$. However if one looks at the restriction of $\phi$ to some vertical line then one should again generically obtain infinite local monodromy at three points, which is impossible with $\mathrm{pr}_1$, thus concluding the proof.

\subsection{Second case: there exists $(p,q)$ in $\ZZ^2\setminus \lbrace (0,0) \rbrace$ such that $p \lambda_0 + q \lambda_1 = 0$} We can assume that at least one of $\dfrac{\lambda_0}{\lambda_1}$ or $\dfrac{\lambda_1}{\lambda_0}$ is a rational number, therefore the transversally projective foliation $\Fr$ introduced in Section \ref{sec:LV} has finite monodromy and so admits some rational first integral $g : \PP^2 \rightarrow \PP^1$. Using Subsection 4.4 in \cite{LTP}, one deduces that the transversally projective structure $(\beta, \alpha, \omega)$ associated with $\Fr$ is equivalent to one of the form $(\tilde{\beta},0,\dd g)$ with the following relations (see \cite{LTP}, Subsection 4.1):
\begin{displaymath}
 \tilde{\beta} \land \dd g = 0 \quad \text{ and } \quad \dd \tilde{\beta} = 0 \;.
\end{displaymath}
The first relation implies that $\tilde{\beta}$ must be of the form $\tilde{\beta} = f \dd g$ for some rational $f : \PP^2  \rightarrow \PP^1$; using the second relation one then gets that
\begin{equation}
\dd f \land \dd g = 0 \; .
\end{equation}
Using standard results from birational geometry (see for example Theorem II.7 in \cite{Beauville}) one obtains that there exists a complex surface $M$ and a finite sequence $\mathfrak{b} : M \rightarrow \PP^2 $ of blow--ups such that $\mathfrak{g} := g \circ \mathfrak{b}$ is a holomorphic function on $M$. Moreover, if we set $\mathfrak{f} := f \circ \mathfrak{b}$ then we must have
\begin{equation}\label{eqn_ThB}
\dd \mathfrak{f} \land \dd \mathfrak{g} = 0 \;.
\end{equation}
It then follows from Stein's factorisation theorem that there exists a complex curve $C$, a ramified covering $r : C \rightarrow \PP^1$ and a fibration $\phi : M \rightarrow C$ with connected fibres such that the following diagram
\begin{displaymath}
\xymatrix{
M \ar[d]_{\mathfrak{g}} \ar[r]^\phi & C \ar[ld]^r \\
\PP^1 &
 }
\end{displaymath}
commutes. This means that locally on any sufficiently small analytic open set $U$ the covering $r$ gives an orbifold coordinate $x$ on the curve $C$ and there exists a biholomorphism $h$ between $U \times F$ and $\phi^{-1}(U)$, where $F$ is a connected complex curve, such that for all $(x,y) \in U \times F$, $g\circ h(x,y) = x$. Therefore relation (\ref{eqn_ThB}) yields:
\begin{displaymath}
\dd (\mathfrak{f}\circ h) \land \dd x = 0 \; .
\end{displaymath}
Thus $\mathfrak{f}$ depends locally only on $\mathfrak{g}$ and since the fibres of $\phi$ are connected one can conclude using analytic continuation that $\mathfrak{f}$ is globally a function of $\mathfrak{g}$. In the end, this implies that the transversally projective structure associated with $\Fr$ is equivalent to $(f(g)\dd g,0, \dd g)$ and so factors through through the algebraic map associated with $f$ on $\PP^2 - I$, where $I$ is the indeterminacy locus of $f$.

\backmatter

\nocite{*}
\bibliographystyle{smfalpha}
\bibliography{2014-garnierEx6}

\end{document}